\definecolor{sq3sq1color}{rgb}{0.5,0,0.5}
\definecolor{sq2color}{rgb}{0.1,0.7,0.1}
\colorlet{taucolor}{red}
\colorlet{partialsq2sq1color}{sq3sq1color!53!black}
\colorlet{incsq2sq1color}{sq3sq1color!67!green}
\colorlet{sq2rhosq1color}{taucolor!45!sq2color}
\colorlet{sq2prcolor}{sq2color!53!black}
\colorlet{incsq2color}{sq2color!67!yellow}
\colorlet{sq2partialcolor}{sq2color!42!blue}
\colorlet{tauprcolor}{taucolor!53!black}
\colorlet{taupartialcolor}{taucolor!42!yellow}
\theoremstyle{plain}
\newtheorem{theorem}{Theorem}[section]
\newtheorem*{theorem*}{Theorem}
\newtheorem{lemma}[theorem]{Lemma}
\newtheorem{proposition}[theorem]{Proposition}
\newtheorem*{proposition*}{Proposition}
\newtheorem*{corollary*}{Corollary}
\newtheorem{remark}[theorem]{Remark}
\newtheorem{remark*}{Remark}
\newtheorem{defn*}{Definition}
\newtheorem*{induction*}{Induction Hypothesis}
\newcommand{\im}{\operatorname{im}}
\newcommand{\supp}{\mathrm{supp}}
\newcommand{\CC}{\mathbb{C}}
\newcommand{\FF}{\mathbb{F}}
\newcommand{\MM}{\mathbb{M}}
\newcommand{\NN}{\mathbb{N}}
\newcommand{\QQ}{\mathbb{Q}}
\newcommand{\RR}{\mathbb{R}}
\newcommand{\ZZ}{\mathbb{Z}}
\newcommand{\Spec}{\operatorname{Spec}}
\newcommand{\smsh}{\wedge}
\newcommand{\mfz}[1][]{\MM^F_{#1}\ZZ}
\newcommand{\mfsz}{\MM^F_*\ZZ}
\newcommand{\mfsf}{\MM^F_*\FF_p}
\newcommand{\msz}[1][]{\MM^S_{#1}\ZZ}
\newcommand{\mssz}{\MM^S_*\ZZ}
\newcommand{\msf}[1][]{\MM^S_{#1}\FF_p}
\newcommand{\mssf}{\MM^S_*\FF_p}
\newcommand{\tmsz}[1][]{\widetilde{\MM^F_{#1}\ZZ}}
\newcommand{\tmssz}{\widetilde{\MM^S_*\ZZ}}
\newcommand{\tmsf}[1][]{\widetilde{\MM^F_{#1}\FF_p}}
\newcommand{\tmssf}{\widetilde{\MM^S_*\FF_p}}
\begin{document}

\title{The integral motivic dual Steenrod algebra}
\author{Bj{\o}rn Ian Dundas, Paul Arne {\O}stv{\ae}r}
\date{\today}
\maketitle

\begin{abstract}
  We calculate the motivic \emph{integral} dual Steenrod algebra $\mssz(\msz)$ over base schemes $S$ for which the \emph{mod $p$} motivic dual Steenrod algebra $\mssf(\msf)$ conforms with Voevodsky's formula.  Then
  \begin{quote}
    the kernel of the augmentation $\mssz(\msz)\to\mssz$ is simple $p$-torsion; 
  \end{quote}in more detail, after localizing at $p$
    the projection $\ZZ\to\FF_p$ gives rise to a pullback 
    square of commutative $\mssz$-algebras
    $$\xymatrix{
 \mssz(\msz)\ar[r]\ar[d]&\ker\beta\ar[d]\\\mssz\ar[r]&\mssf,}
  $$
   where  the $\mssf$-algebra $\ker\beta\subseteq \mssf(\msz)$ is the kernel of the Bockstein homomorphism.
   
 Concrete calculations follow, for instance, if $F$ is a field of characteristic different from $p$ where all $p$-torsion in the motivic homology $\mfsz$ of $\Spec(F)$ is $p$-divisible 
 (meaning that if $px=0$ then $x=py$ for some $y$ -- as is the case if $F$ is algebraically closed) then the motivic $p$-adic dual Steenrod algebra is expressed explicitly as an 
 algebra in terms of generators and relations.  Some other cases of interest are also explored more fully, most notably the base cases of finite fields and $\ZZ[1/2]$.
\end{abstract}

\section{Introduction}
\label{sec:intro}
As part of his proof of the Milnor and Bloch-Kato conjectures on Galoic cohomology and K-theory 
Voevodsky \cite{Voevodskymod2}, \cite{MR2811603} 
needed to know the motivic mod-$p$ dual Steenrod algebra $\mssf(\msf)=\pi_*(\msf\smsh\msf)$ and deduced the formula
\begin{equation}
  \mssf(\msf)=
\begin{cases}
  \mssf[\tau_0,\tau_1,\dots,\xi_1,\xi_2\dots]/\tau_i^2+\xi_{i+1}(\tau+\tau_0\rho)+\tau_{i+1}\rho&p=2\\
  \mssf[\tau_0,\tau_1,\dots,\xi_1,\xi_2\dots]/\tau_i^2&p>2,
\end{cases}
\end{equation}
where $\MM^S\FF_p$ is the mod $p$ motivic Eilenberg-Mac~Lane spectrum with associated graded commutative ring $\mssf=\pi_*\MM^S\FF_p$, $\rho\in\MM_{-1,-1}\FF_p$ is represented by $-1\in K_1\ZZ$ and $\tau\in\MM_{0,-1}\FF_p$.
At the outset, the base scheme $S$ was the spectrum of a field of characteristic zero, but the formula holds in greater generality; 
for instance, 
if $S$ is essentially smooth over a Dedekind domain where $p$ is invertible, see \cite{MR3730515} and \cite{spitzweck2013commutative}.

In this paper we consider motivic \emph{integral} homology $\MM^S\ZZ$
and explore the structure of the motivic integral dual Steenrod algebra $\MM^S_*\ZZ(\MM^S\ZZ)=\pi_*(\MM^S\ZZ\smsh\MM^S\ZZ)$.  Just as in the topological case explored by Kochman \cite{MR686130}, the integral case is significantly wilder than the mod $p$-case, but the kernel of the augmentation map $\mssz(\msz)\to\mssz$ is simple $p$-torsion and determined by the first Bocksteins from the mod $p$ dual Steenrod algebra.  Hence, the higher torsion in the K-theory of $S$, reflected in $\mssz$, affects the motivic integral dual Steenrod algebra only to a limited extent.  More precisely, if $S$ is so that Voevodsky's formula for the  motivic mod-$p$  dual Steenrod algebra holds, then
Theorem~\ref{cor:aspullback} gives the integral motivic Steenrod algebra as a pullback (after localizing at $p$) of commutative $\mssz$-algebras
    $$\xymatrix{
 \mssz(\msz)\ar[r]\ar[d]&\ker\beta\ar[d]\\\mssz\ar[r]&\mssf,}
$$
where  the $\mssf$-algebra $\ker\beta$ is the kernel of the Bockstein homomorphism on
$$\mssf(\msz)\cong
  \begin{cases}
    \mssf[\tau_1,\tau_2,\dots,\xi_1,\xi_2,\dots]/\tau_{i+1}^2+\xi_{i+2}\eta_L(\tau)+\tau_{i+2}\eta_L(\rho)& p=2\\
    \mssf[\tau_1,\tau_2,\dots,\xi_1,\xi_2,\dots]/\tau_{i+1}^2&p>2.
  \end{cases}
  $$

With this fact established, one can deduce concrete generators and relation expressions for the motivic integral dual Steenrod algebra, given that such a description is known for the integral motivic cohomology of $S$.  We discuss a few fundamental cases.

The simplest is the case where $S$ is an algebraically closed field of characteristic different from $p$ (which is a special case of Proposition~\ref{prop:algclosed} 
which applies when the $p$-torison in the integral motivic homology is $p$-divisible; that is, without simple $p$-torsion).
Removing the infinitely $p$-divisible part by considering $p$-adic coefficients -- which in this case is given by $\mssz_p=\ZZ_p[\tau]$ -- we get a description of the motivic $p$-adic dual Steenrod algebra $\mssz(\msz)_{p}$ as an augmented $\mssz_{p}$-algebra, 
  $$\mssz(\msz)_{p}\cong
  \mssz_{p}[y_{a,U}]/\mathcal I,$$
  where $a$ varies over functions $a\colon\NN\to\NN$ with finite support and $U$ over finite sets of positive integers and where the ideal $\mathcal I$ is
generated by
\begin{enumerate}
\item $py_{a,U}$,
\item For each $j>0$, $\sum_{U\subseteq\supp\, a, |U|=j}y_{a-\delta_U,U}$ (where $(a-\delta_U)_i$ is $a_i$ if $i\notin U$ and $a_{i-1}$ if $i\in U$) and 
\item $y_{a,U}y_{b,T}$ plus terms linear in the $y_{c,S}$s, as written out explicitly in Lemma~\ref{lem:relation}. 
\end{enumerate}
Comparing with the mod-$p$ case, the generator $y_{a,U}$ corresponds to
$$\sum_{j\in U}\xi_{j+1}\prod_{i\geq 0}\xi^{a_{i+1}}_{i+1}\prod_{k\in U-\{j\}}\tau_k\in\mssf(\mssf)$$ and the ideal $\mathcal I$ is essentially read off from this.
When working over complex numbers, the topological counterpart has no infinitely divisible classes. If one wants multiplicative information about the topological integral dual Steenrod algebra beyond what Kochman mentions explicitly, one replaces $\tau$ by $1$ in all the relations above.

To see what classes are generic for fields, we also calculate $\mssz(\msz)_{p}$ at the other extreme cases.  In positive characteristic, we must understand the case $S=\Spec(\FF_q)$ for $\FF_q$, a finite field of characteristic different from $p$.  This case is simplified by the fact that $K_2\FF_q=0$ and since $K_1\FF_q\cong\ZZ/(q-1)$ we get that $\mssz(\msz)_{p}$ depends on to what extent $p$ divides $q-1$.  If $p^2$ divides $q-1$, we are covered by  Proposition~\ref{prop:algclosed}, but if not, the calculation is performed in Lemma~\ref{lem:fifi}.

In characteristic zero, the real case is displayed in Lemma~\ref{lemma:R} and provides control over enough classes to do the base case of the $2$-inverted integers $\ZZ[1/2]$ in Lemma~\ref{lemma:Z12} when relying on the calculations of Spitzweck \cite{spitzweck2013commutative}.

While we were led to the motivic integral dual Steenrod algebra as part of our program of understanding motivic Hochschild homology initiated in \cite{dhko}, 
we expect the result to be useful for other calculations in the stable motivic homotopy category.
  
{\bf Acknowledgements.} 
The authors acknowledge support from the RCN Frontier Research Group Project no.~250399 ``Motivic Hopf Equations."
Work on this paper was supported by The European Commission -- Horizon-MSCA-PF-2022 ``Motivic integral $p$-adic cohomologies."

\subsubsection{Notation}
\label{sec:notation}

\begin{itemize}
\item $p$ is a prime, and $S$ is a base scheme with $p$ invertible, so the mod $p$ dual Steenrod algebra is given by formula~\ref{eq:VV} below,
\item we let $\eta_L\colon E\to E\smsh E'$ and $\eta_R\colon E'\to E\smsh E'$ be generic names for the left and right inclusions when $E$ and $E'$ are motivic ring spectra,
\item for $R$ a ring, $\MM^SR$ is the motivic commutative ring spectrum representing motivic cohomology with coefficients in $R$ and we write $\MM_*^SR=\pi_*\MM^SR$.  
The localization away from (resp. completion at) $p$ is denoted $\msz_{(p)}$ (resp. $\msz_p$) with coefficients $\mssz_{(p)}$ (resp. $\mssz_p$). 
\item For commutative motivic ring spectra $A$ and $B$, we write $A_*B=\pi_*(A\smsh B)$, and if $A=B$, we may refer to this graded commutative ring as the dual Steenrod algebra. 
By abuse of notation we write $\mssz(\msz)_p$ for $\pi_*((\msz\smsh\msz)_p)$, which we refer to as the $p$-adic motivic dual Steenrod algebra.
\end{itemize}

\section{The integral  motivic dual Steenrod algebra}
\label{sec:modpSteenrod}

For our main result, Theorem~\ref{cor:aspullback},  it will suffice to assume that the base scheme $S$ is such that Voevodsky's formula gives the mod $p$ dual Steenrod algebra
\begin{equation}
  \label{eq:VV}
  \pi_*(\msf\smsh\msf)=
\begin{cases}
  \mssf[\tau_0,\tau_1,\dots,\xi_1,\xi_2\dots]/\tau_i^2+\xi_{i+1}(\tau+\tau_0\rho)+\tau_{i+1}\rho&p=2\\
  \mssf[\tau_0,\tau_1,\dots,\xi_1,\xi_2\dots]/\tau_i^2&p>2
\end{cases}
\end{equation}
(see e.g.~\cite{riou2012operations} for elaborations on Voevodsky's arguments), where the $\mssf$-algebra structure is induced by the right inclusion $\eta_R\colon\msf\to\msf\smsh\msf$
so that when we write $\rho$ and $\tau$ in the formula above, we mean $\eta_R\rho$ and $\eta_R\tau$.  We will denote the unit element $\xi_0$ or $1$ according to what is most convenient.  Examples where Equation~\ref{eq:VV}  is true include the case where $S$ is essentially smooth over a field with characteristic different from $p$, see \cite{MR3730515}.

The conjugation $\chi\colon\msf\smsh\msf\cong\msf\smsh\msf$ gives a map of homotopy rings (but will change the $\mssf$-algebra structure from $\eta_R$ to the left unit map $\eta_L$) and is given inductively on generators by
\begin{align*}
  \chi(\rho)&=\eta_L\rho=\rho,     
  \\
  \chi(\tau)&=\eta_L\tau=\tau+\rho\tau_0,\\
  0&=\tau_0+\chi\tau_0\\
  0&=\tau_r+\chi\tau_r+\xi_1^{p^{r-1}}\chi\tau_{r-1}+\xi_2^{p^{r-2}}\chi\tau_{r-2}+\dots+\xi_{r-1}^p\chi\tau_1+\xi_r\chi\tau_0\\
  0&=\xi_r+\chi\xi_r+\xi_1^{p^{r-1}}\chi\xi_{r-1}+\xi_2^{p^{r-2}}\chi\xi_{r-2}+\dots+\xi_{r-1}^p\chi\xi_1\\
\end{align*}
for $r>0$.

The Bockstein on the dual Steenrod algebra is with respect to the map $p\smsh 1\colon \msz\smsh\msf\to \msz\smsh\msf$ and
\begin{align*}
  \beta\tau_r&=\xi_r,\quad r\geq 0,\\
  \beta\eta_R&\simeq 0.
\end{align*}

To see that $\beta\eta_R\simeq0$ consider the map of fiber sequence
$$\xymatrix{\msz\smsh\msf\ar[r]^{p\smsh 1}&\msz\smsh\msf\ar[r]^q&\msf\smsh\msf\ar[r]^{\partial}&\Sigma^{1,0}\msz\smsh\msf\\
\msf\ar[r]^{p\smsh 1}\ar[u]^{\eta_R}&\msf\ar[r]^-q\ar[u]^{\eta_R}&\msf\vee\Sigma^{1,0}\msf\ar[r]^-{\partial}\ar[u]&\Sigma^{1,0}\msf. \ar[u]^{\eta_R}}
$$

\begin{remark}
  In particular, note that $\beta\tau=0$ since we are using the right inclusion, as opposed to the left inclusion $\beta(\eta_L\tau)=\beta(\chi\tau)=\beta(\tau+\tau_0\rho)=\rho$.  While this may be surprising, note as a warning that taking the Bockstein of the relation $\tau_i^2+\xi_{i+1}(\tau+\tau_0\rho)+\tau_{i+1}\rho=0
$ better give $0$ on both sides, which would \emph{not} the case were $\beta\tau$ different from $0$.
Note that $\chi$ is a ring isomorphism, and so the $\chi\tau_i$ and $\chi\xi_{i+1}$ form a set of generators for $\mssf$, \emph{both} as a left and as a right $\mssf$ algebra (but one should beware that the relation one should quotient out by in the case $p=2$ now reads
$\chi\tau^2_i=\chi\xi_{i+1}\tau+\chi\tau_{i+1}\rho$).
\end{remark}

\begin{lemma}
  The map $q\colon\msz\to \msf$ induces an injection $q\colon\mssz(\msf)\to \mssf(\msf)$ with image the right $\mssf$-subalgebra generated by $\chi\tau_{i+1}$ and $\chi\xi_{i+1}$ for $i\geq 0$: consequently we get isomorphisms
  $$\mssz(\msf)\cong
  \begin{cases}
    \mssf[\chi\tau_{i+1},\chi\xi_{i+1}]_{i\geq 0}/(\chi\tau_{i+1})^2+\chi\xi_{i+2}\tau+\chi\tau_{i+2}\rho& p=2\\
    \mssf[\chi\tau_{i+1},\chi\xi_{i+1}]_{i\geq 0}/(\chi\tau_{i+1})^2&p>2
  \end{cases}
  $$
  as right $\mssf$-algebras.  Upon conjugation, we thus get an isomorphism
  $$\mssf(\msz)\cong
  \begin{cases}
    \mssf[\tau_{i+1},\xi_{i+1}]_{i\geq 0}/\tau_{i+1}^2+\xi_{i+2}\eta_L(\tau)+\tau_{i+2}\eta_L(\rho)& p=2\\
    \mssf[\tau_{i+1},\xi_{i+1}]_{i\geq 0}/\tau_{i+1}^2&p>2.
  \end{cases}
  $$
  of left $\mssf$-algebras.
\end{lemma}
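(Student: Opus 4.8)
The plan is to run the long exact sequence obtained by smashing the cofiber sequence $\msz\xrightarrow{p}\msz\xrightarrow{q}\msf$ with $\msf$ (the first row of the diagram displayed above) and then to compute the relevant kernel explicitly. First I would note that $\msz\smsh\msf$ is a module over the ring spectrum $\msf$, on which $p$ acts as zero because $p\cdot 1=0$ in $\pi_{0,0}\msf=\FF_p$; hence $p\smsh 1$ induces the zero map on $\mssz(\msf)=\pi_*(\msz\smsh\msf)$. The long exact sequence therefore breaks into short exact sequences $0\to\mssz(\msf)\xrightarrow{q}\mssf(\msf)\xrightarrow{\partial}\mssz(\msf)[-1]\to 0$, which already yields injectivity of $q$ together with $\im q=\ker\partial$. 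Since the Bockstein factors as $\beta=q\circ\partial$ with $q$ injective, we get $\ker\partial=\ker\beta$, so the whole problem reduces to identifying $\ker\beta\subseteq\mssf(\msf)$ as a right $\mssf$-algebra.

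Next I would check the inclusion $\supseteq$, that the proposed generators lie in $\ker\beta$. Using that $\beta$ is a right $\mssf$-linear (odd) derivation with $\beta\tau_r=\xi_r$, hence $\beta\xi_r=\beta\beta\tau_r=0$, the relation $\chi\tau_0=-\tau_0$ gives $\beta\chi\tau_0=-1$. Feeding this into the conjugation recursions for $\chi\tau_r$ and $\chi\xi_r$ and inducting on $r$, every term $\xi_k^{p^{r-k}}\beta\chi\tau_{r-k}$ with $r-k\geq 1$ vanishes by the inductive hypothesis, while the single surviving term $k=r$ contributes $\xi_r\beta\chi\tau_0=-\xi_r$, cancelling the $-\beta\tau_r=-\xi_r$ coming from the leading term; thus $\beta\chi\tau_r=0$ for $r\geq 1$, and likewise $\beta\chi\xi_r=0$ since the $\chi\xi$-recursion involves only the $\xi$'s. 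Consequently the right $\mssf$-subalgebra $R$ generated by the $\chi\tau_{i+1},\chi\xi_{i+1}$ is contained in $\ker\beta$.

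For the reverse inclusion I would use that, by the preceding remark, the $\chi\tau_i$ and $\chi\xi_i$ generate $\mssf(\msf)$ as a right $\mssf$-algebra, subject only to the conjugated relations $(\chi\tau_i)^2=\chi\xi_{i+1}\tau+\chi\tau_{i+1}\rho$ (for $p=2$) or $(\chi\tau_i)^2=0$ (for $p>2$). Since the passage from $\{\tau_i,\xi_i\}$ to $\{\chi\tau_i,\chi\xi_i\}$ is a triangular substitution whose diagonal entries are $-1$, the admissible monomials in the $\chi$-generators again form a right $\mssf$-basis; splitting off the class $\chi\tau_0$ then exhibits $\mssf(\msf)=R\oplus R\cdot\chi\tau_0$ as a free $R$-module of rank two, where for $p=2$ the relation $(\chi\tau_0)^2=\chi\xi_1\tau+\chi\tau_1\rho$ merely rewrites $(\chi\tau_0)^2$ inside $R$, so the splitting persists. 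Writing $x=a+\chi\tau_0 b$ with $a,b\in R\subseteq\ker\beta$ and using $\beta\chi\tau_0=\pm 1$ gives $\beta x=\pm b$, whence $\beta x=0$ forces $b=0$ and $\ker\beta=R$. This identifies $\im q=\ker\beta=R$, and transporting Voevodsky's relations through $\chi$ gives the stated presentation of $\mssz(\msf)\cong R$ as right $\mssf$-algebras.

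Finally, to obtain $\mssf(\msz)$ I would invoke the swap isomorphism $\msz\smsh\msf\cong\msf\smsh\msz$, which on $\msf\smsh\msf$ is precisely the conjugation $\chi$ and which interchanges the left and right $\mssf$-module structures; it carries $R$ to $\chi(R)$, sends $\chi\tau_i\mapsto\tau_i$, $\chi\xi_i\mapsto\xi_i$ and $\eta_R\mapsto\eta_L$, and turns the relations above into $\tau_{i+1}^2+\xi_{i+2}\eta_L(\tau)+\tau_{i+2}\eta_L(\rho)$ (respectively $\tau_{i+1}^2$), producing the claimed description of $\mssf(\msz)$ as a left $\mssf$-algebra. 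The main obstacle I anticipate is the reverse inclusion: making the triangular change of variables and the resulting rank-two splitting over $R$ fully rigorous, especially for $p=2$ where $\chi\tau_0$ is not square-zero and one must confirm that the conjugated quadratic relations remain inside $R$ rather than forcing extra identifications. By comparison, the injectivity of $q$ and the verification $\beta\chi\tau_{i+1}=\beta\chi\xi_{i+1}=0$ are routine.
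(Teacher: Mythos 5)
Your proof is correct and follows essentially the same route as the paper's: injectivity of $q$ from $p=0$ in $\mssz(\msf)$, the inductive computation $\beta\chi\tau_{i+1}=\beta\chi\xi_{i+1}=0$ from the conjugation recursions, and the identification $\im q=\ker\partial=\ker\beta$. The only difference is that you spell out the reverse inclusion $\ker\beta\subseteq R$ via the rank-two splitting $\mssf(\msf)=R\oplus\chi\tau_0\cdot R$ and the final flip argument, steps the paper's terser proof leaves implicit.
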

\begin{proof}
  The map $q$ is an injection since $p=0\in \mssz(\msf)$.
  Since $\beta\chi\tau_0=-\beta\tau_0=-1$ induction on the equations for $\chi$ proves that $\beta\chi\tau_{i+1}=0$ and $\beta\chi\xi_{i+1}=0$ for $i\geq 0$.
  The statement follows since $q$ being an injection forces the kernels of $\partial$ and $\beta\Sigma^{1,0}q\partial$ to coincide.
\end{proof}
Now, to get at $\mssz(\msz)$ we consider
the map of cofiber sequences
$$
\xymatrix{\msz\smsh \msz\ar[r]^{p\smsh 1}\ar[d]^{1\smsh q}&\msz\smsh \msz\ar[r]^q\ar[d]^{1\smsh q}&\msf\smsh \msz\ar[r]^-{\bar\beta}\ar[d]^{1\smsh q}&\Sigma^{1,0} \msz\smsh \msz\ar[d]^{1\smsh q}\\
  \msz\smsh \msf\ar[r]^{p\smsh 1}&\msz\smsh \msf\ar[r]^q&\msf\smsh \msf\ar[r]^-{\bar\beta}&\Sigma^{1,0} \msz\smsh \msf.}
$$
Since (the vertical) $1\smsh q\colon\msf\smsh\msz\to \msf\smsh\msf$ identifies $\mssf(\msz)$ as a subalgebra of $\mssf(\msf)$, the nontrivial Bocksteins of the upper row are given by $\beta\tau_{i+1}=\xi_{i+1}$ for $i\geq0$, plus whatever Bocksteins that may be hiding in the coefficients $\mssf$ themselves.

When varying $a$ over functions $\NN\to\NN$ with finite support (with value $a_i$ at $i\in \NN$) and $U$ over finite sets of positive integers, the elements
\begin{equation}
  \label{eq:eta}
  \eta_{a,U}=\prod_{i\geq0}\xi^{a_{i+1}}_{i+1}\prod_{j\in U}\tau_j
\end{equation}
(where $\eta_{0,\emptyset}=1$) give a basis for $\mssf(\msz)$ as a free $\mssf$-module.

Let
$$X=\FF_p[\tau_{i+1},\xi_{i+1}]_{i\geq 0}/\tau^2_{i+1}=\FF_p\{\eta_{a,U}\}_{a,U}$$
equipped with the differential $\beta\eta_{a,U}=\sum_{j\in U}\eta_{a+\delta_j,U-\{j\}}$.  The
(purely algebraic) isomorphism
$$\mssf(\msz)\cong \mssf\otimes_{\FF_p} X$$ of $\mfsf$-modules (algebras if $p$ is odd) is also an isomorphism of Bockstein complexes.  
Hence the K\"unneth isomorphism yields an isomorphism of homology groups 
$$H_*(\mssf(\msz),\beta)\cong H_*(\mfsf,\beta)\otimes_{\FF_p} H_*(X,\beta).
$$
Since $H_*(X,\beta)\to\FF_p$ turns out to be an isomorphism, this gives full control of the Bockstein homology (also at $p=2$):

\begin{lemma}
  \label{lem:bockseteinxtildeiszero}
  The multiplication $\msf\smsh\msz\to\msf$ induces an isomorphism in Bockstein homology $H_*(\mssf(\msz),\beta)\cong\mfsf$.
\end{lemma}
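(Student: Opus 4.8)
The plan is to isolate the purely combinatorial input, since the preceding paragraphs already furnish the isomorphism $\mssf(\msz)\cong\mssf\otimes_{\FF_p}X$ of free $\mssf$-modules that is simultaneously an isomorphism of Bockstein complexes, together with the attendant Künneth splitting $H_*(\mssf(\msz),\beta)\cong H_*(\mssf,\beta)\otimes_{\FF_p}H_*(X,\beta)$. Granting these, the whole lemma reduces to the single assertion that the augmentation $\varepsilon\colon X\to\FF_p$ (sending every $\tau_{i+1},\xi_{i+1}$ to $0$, i.e. $\eta_{a,U}\mapsto 0$ unless $a=0,\ U=\emptyset$) induces an isomorphism $H_*(X,\beta)\xrightarrow{\sim}\FF_p$. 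Indeed, under the module identification the multiplication $\msf\smsh\msz\to\msf$ is exactly $\mathrm{id}_{\mssf}\otimes\varepsilon$, so once $H_*(X,\beta)\cong\FF_p$ is established the Künneth factor coming from $X$ collapses and the multiplication becomes an isomorphism on Bockstein homology, which is the statement of the lemma.

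So the first real step is to compute $H_*(X,\beta)$. I would use that $\beta$ is a derivation with $\beta\tau_j=\xi_j$ and $\beta\xi_j=0$, which factors $X$ as a tensor product $X\cong\bigotimes_{j\geq 1}X_j$ of differential graded $\FF_p$-algebras, where $X_j=\FF_p[\xi_j]\otimes\Lambda(\tau_j)$ carries the differential $\beta\tau_j=\xi_j$. Each $X_j$ is the elementary acyclic Koszul complex: its cycles are the powers $\xi_j^n$ and its boundaries are $\beta(\xi_j^n\tau_j)=\xi_j^{n+1}$, whence $H_*(X_j,\beta)\cong\FF_p$ generated by $1$; concretely this is witnessed by the contracting homotopy $s(\xi_j^n)=\xi_j^{n-1}\tau_j$ (with $s(1)=0$ and $s(\xi_j^n\tau_j)=0$), which satisfies $\beta s+s\beta=\mathrm{id}-\varepsilon_j$. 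Since $\FF_p$ is a field, the Künneth theorem gives $H_*(\bigotimes_{j\le N}X_j,\beta)\cong\FF_p$ for every finite $N$, and as homology commutes with the filtered colimit $X=\varinjlim_N\bigotimes_{j\le N}X_j$ we conclude $H_*(X,\beta)\cong\FF_p$, realized by $\varepsilon$.

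The hard part is not this homological algebra but the bookkeeping hidden in the phrase ``also at $p=2$.'' At $p=2$ the algebra $\mssf(\msz)$ does not satisfy $\tau_{i+1}^2=0$ but the twisted relation $\tau_{i+1}^2=\xi_{i+2}\eta_L(\tau)+\tau_{i+2}\eta_L(\rho)$, so $X$ is genuinely not a subalgebra and one must be sure the differential still splits as a tensor differential. The point I would make precise is that $X$ intervenes only as a Bockstein complex of free $\mssf$-modules on the basis $\{\eta_{a,U}\}$, which is all Künneth needs: in $\beta\eta_{a,U}=\sum_{j\in U}\eta_{a+\delta_j,\,U-\{j\}}$ each $\tau_j$ occurs to the first power, so turning a single $\tau_j$ into $\xi_j$ never creates a square and never triggers the twisted relation, and the output stays in the $\FF_p$-span of the $\eta_{a,U}$. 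Hence $\beta=\beta_{\mssf}\otimes 1\pm 1\otimes\beta_X$ for all $p$, the tensor computation applies verbatim, and the only remaining thing to double-check is that the multiplication map really induces $\varepsilon$ and not a twisted variant --- which it does, since the generators $\tau_{i+1},\xi_{i+1}$ lie in the augmentation ideal of the $\msz$-factor and are killed by $\msf\smsh\msz\to\msf$.
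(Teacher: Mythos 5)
Your proposal is correct and follows essentially the same route as the paper: reduce via the K\"unneth splitting to the claim that the augmentation $X\to\FF_p$ is a quasi-isomorphism, then decompose $X$ generator-by-generator into elementary acyclic pieces (including the same observation that the differential on the $\eta_{a,U}$-basis never invokes the twisted relation at $p=2$). The paper phrases the final step as a decomposition into two-term acyclic complexes $C^{i,j}=\{\FF_p\{\xi_{i+1}^j\}\gets\FF_p\{\xi_{i+1}^{j-1}\tau_{i+1}\}\}$ rather than via your contracting homotopy $s(\xi_j^n)=\xi_j^{n-1}\tau_j$ on each Koszul factor, but these are the same computation.
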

\begin{proof}
  We must show that the augmentation $X\to\FF_p$ is an isomorphism in Bockstein homology.
  For $i\geq0$ and $j>0$ consider the acyclic complex
$$C^{i,j}=\{C^{i,j}_0\gets C^{i,j}_1\}=\{\FF_p\{\xi^{j}_{i+1}\}\gets\FF_P\{\xi^{j-1}_{i+1}\tau_{i+1}\}\}$$
given by $\beta$ and, in addition, for all $i\geq0$, let $C^{i,0}=\{\FF_p\{1\}\}$ be concentrated in degree $0$.

Note that for $\epsilon\in\ZZ/2$, $j>0$ and $i\geq0$ we have that 
$$C^{i,j+\epsilon}_\epsilon=\xi^j_{i+1}\tau^\epsilon_{i+1},
$$
and so the $\FF_p$-linear basis for $X$ takes the form
$$\eta_{a,U}=\prod_{i\geq0}C^{i,a_i+\epsilon_i}_{\epsilon_i},
$$
where $a\colon\NN\to\NN$ has finite support, $U$ is a finite set of positive integers and where $\epsilon\colon\NN\to\ZZ/2$ is the characteristic function with nonzero values $\epsilon(i)=1$ when $i+1\in U$.

In other words, $X$ is isomorphic to the tensor product $\bigotimes_{i\geq0,j\geq0} C^{i,j}$ (which is a filtered colimit of finite tensors), where all factors but the ones with $j=0$ are acyclic. The factors with $j=0$ are copies of $\FF_p$ concentrated in degree zero.
\end{proof}

Let $\tmsz\smsh\msz$ (resp. $\tmsf\smsh\msz$) be the fiber of the multiplication $\msz\smsh\msz\to\msz$
(resp. $\msf\smsh\msz\to\msf$) with homotopy ideal $\tmssz(\msz)\subseteq \mssz(\msz)$ (resp. $(\tmssf)\msz\subseteq (\mssf)\msz$).
By Lemma~\ref{lem:bockseteinxtildeiszero}, the Bockstein complex of $\tmssf(\msz)\subseteq \mssf(\msz)$ is acyclic, and so
\begin{theorem}
  \label{cor:aspullback}
  If the base scheme $S$ is so that the mod $p$ motivic dual Steenrod algebra is as in Equation~\ref{eq:VV} (for instance, if $S=Spec(F)$ for $F$ a field of characteristic different from $p$), then the square
  $$\xymatrix{
 \mssz(\msz)\ar[r]^-q\ar[d]&\ker\beta_{\mssf(\msz)}\ar[d]\\\mssz\ar[r]^q&\mssf}
$$
of commutative $\mssz$-algebras 
is a pullback after localizing at $p$, where $\ker\beta_{(\mssf)\msz}\subseteq (\mssf)\msz$  is the kernel of the Bockstein homomorphism.

   Otherwise stated, the augmentation ideal $\tmssz(\msz)$ is simple $p$-torsion, mapping isomorphically to the image of the Bockstein on $\tmssf(\msz)$ and so
  $\mssz(\msz)_{(p)}$ is the semi-direct product of $\mssz_{(p)}$ and the image of the Bockstein on $\tmssf(\msz)$, with module structure factoring over $q\colon\mssz_{(p)}\to\mssf$.
\end{theorem}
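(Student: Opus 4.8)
\emph{Approach.} The plan is to convert the assertion into a computation with the long exact (Bockstein) sequence attached to the cofiber sequence $\msz\smsh\msz\xrightarrow{p\smsh 1}\msz\smsh\msz\xrightarrow{q}\msf\smsh\msz\xrightarrow{\bar\beta}\Sigma^{1,0}\msz\smsh\msz$ in its reduced (augmentation-ideal) form, and then to feed in the acyclicity supplied by Lemma~\ref{lem:bockseteinxtildeiszero}. Write $A=\mssz(\msz)$, $B=\mssf(\msz)$, and let $\tilde A=\tmssz(\msz)$, $\tilde B=\tmssf(\msz)$ be the augmentation ideals. Smashing the defining cofiber sequence of $\msf$ with $\msz$ and passing to fibers of the multiplication maps produces a reduced cofiber sequence $\tmsz\smsh\msz\xrightarrow{p}\tmsz\smsh\msz\xrightarrow{q}\tmsf\smsh\msz\xrightarrow{\bar\beta}\Sigma^{1,0}\tmsz\smsh\msz$, whose homotopy long exact sequence I repackage as the universal-coefficient short exact sequences $0\to\tilde A_n/p\xrightarrow{q}\tilde B_n\xrightarrow{\bar\beta}{}_p\tilde A_{n-1}\to 0$. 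The mod-$p$ Bockstein on $\tilde B$ is the composite $\beta=q\bar\beta$, so $\ker\beta$ and $\ker\bar\beta$ inside $\tilde B$ coincide precisely when $q|_{\tilde A}$ is injective.

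\emph{Torsion analysis.} First I would show $\tilde A$ is simple $p$-torsion. Running the Bockstein spectral sequence with $E_1=\tilde B$ and $d_1=\beta$ — equivalently, chasing the two universal-coefficient sequences above — the vanishing $H_*(\tilde B,\beta)=0$ of Lemma~\ref{lem:bockseteinxtildeiszero} forces, in each degree, both ${}_p\tilde A\cap p\tilde A=0$ and $\tilde A={}_p\tilde A+p\tilde A$; hence $\tilde A\cong{}_p\tilde A\oplus p\tilde A$ with $p\tilde A$ torsion-free and $p$-divisible. Acyclicity therefore excludes all higher ($p^2$-)torsion, but by itself still permits a torsion-free $p$-divisible (rational) summand $D=p\tilde A$, which is invisible to the mod-$p$ homotopy $\tilde B$. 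I would kill $D$ separately by rationalizing: since rational motivic cohomology is idempotent (the multiplication $\msz\smsh\msz\to\msz$ is a rational equivalence, so the augmentation ideal is rationally trivial), $\tilde A\otimes\QQ=0$ and thus $D=0$. Consequently $p\tilde A=0$, the map $q|_{\tilde A}\colon\tilde A\hookrightarrow\tilde B$ is injective, and its image is $\ker\bar\beta=\ker\beta=\im\beta$; this is exactly the statement that $\tilde A$ is simple $p$-torsion mapping isomorphically onto the image of the Bockstein on $\tmssf(\msz)$.

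\emph{The pullback square.} With the torsion understood, the square is formal. Both augmentations split through their units, giving $\beta$-compatible decompositions $A=\mssz\oplus\tilde A$ and $B=\mssf\oplus\tilde B$ (the unit summand is $\beta$-stable since $\beta(1)=0$ and $\beta$ is a derivation, and the ideal is $\beta$-stable); hence $b\in\ker\beta_B$ iff its coefficient part lies in $\ker\beta_{\mssf}$ and its ideal part in $\ker\beta_{\tilde B}$. I would then verify cartesianness by checking exactness of $0\to A\xrightarrow{(\mathrm{aug},\,q)}\mssz\oplus\ker\beta_B\to\mssf$: injectivity is immediate from injectivity of $q|_{\tilde A}$, and surjectivity onto the fibre product uses $\ker\beta_{\tilde B}=\ker\bar\beta_{\tilde B}=\im(q|_{\tilde A})$ together with the automatic containment $q(\mssz)\subseteq\ker\beta_{\mssf}$ (the Bockstein of a mod-$p$ reduction vanishes). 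The reformulation then follows: $\tilde A$ is an ideal killed by $p$, so $A_{(p)}=\mssz_{(p)}\oplus\tilde A$ as $\mssz_{(p)}$-modules with the action on $\tilde A$ factoring through $q\colon\mssz_{(p)}\to\mssf$, because $\tilde A\subseteq\tilde B$ is an $\mssf$-module and $p\tilde A=0$.

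\emph{Main obstacle.} The delicate point is the torsion analysis, not the pullback. Bockstein acyclicity is exactly strong enough to exclude higher torsion and to pin down $\ker\beta=\ker\bar\beta$ on $\tilde B$, but it cannot on its own detect a torsion-free $p$-divisible summand; the separate rational-triviality input (idempotence of rational motivic cohomology) is essential, and this is where one genuinely uses that $S$ is nice enough for the rational computation to hold. A secondary bookkeeping subtlety is the coefficient Bockstein hidden in $\mssf$: one must keep the splittings $\beta$-compatible and track that it is the ideal part $\tilde B$ — not the coefficients — whose Bockstein is acyclic, so that the identification $\ker\beta=\ker\bar\beta$ is only asserted on $\tilde B$.
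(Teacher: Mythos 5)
Your proposal is correct and pivots on the same key input as the paper's proof---Lemma~\ref{lem:bockseteinxtildeiszero} followed by formal algebra---but it differs in one substantive way: you prove, rather than assert, the translation from Bockstein acyclicity to the torsion statement, and in doing so you supply an ingredient the paper leaves implicit. The paper's proof says that $\im\beta_{\tmssf(\msz)}=\ker\beta_{\tmssf(\msz)}$ ``is equivalent to'' $\tmssz(\msz)$ being simple $p$-torsion mapping isomorphically onto $\ker\beta_{\tmssf(\msz)}$; as your analysis shows, this is not literally an equivalence, since acyclicity of the mod~$p$ Bockstein complex yields only a splitting of $\tmssz(\msz)$ into its simple $p$-torsion and a uniquely $p$-divisible summand, and mod~$p$ homotopy is blind to the latter (after localizing at $p$ it is a $\QQ$-vector space). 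Your rationalization step---the multiplication $\msz\smsh\msz\to\msz$ is a rational equivalence, hence $\tmssz(\msz)\otimes\QQ=0$---is exactly what kills that summand, and you are right to flag it as the delicate point: it is a genuine extra geometric input about rational motivic cohomology, not a formal consequence of Equation~\ref{eq:VV}, though it does hold in all cases where Equation~\ref{eq:VV} is known. So your argument is, if anything, more complete than the paper's at this step. For the pullback itself the two arguments are the same formal content packaged differently: the paper pulls back the split exact sequence $\ker\beta_{\tmssf(\msz)}\to\ker\beta_{\mssf(\msz)}\to\ker\beta_{\mssf}$ along $\mssz\to\ker\beta_{\mssf}$ and notes that pulling back over $\ker\beta_{\mssf}$ or over $\mssf$ agrees, whereas you check exactness of $0\to\mssz(\msz)\to\mssz\oplus\ker\beta_{\mssf(\msz)}\to\mssf$ using $\beta$-compatible unit splittings. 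Two harmless imprecisions in your write-up, neither affecting the proof: first, $\ker\beta=\ker\bar\beta$ inside $\tmssf(\msz)$ holds precisely when ${}_p\tmssz(\msz)\cap p\,\tmssz(\msz)=0$, which is weaker than injectivity of $q$ on $\tmssz(\msz)$ (you only ever use the valid implication); second, the image of $q$ restricted to $\tmssz(\msz)$ is a $\ker\beta_{\mssf}$-submodule rather than an $\mssf$-submodule of $\tmssf(\msz)$, since coefficients can carry a nonzero Bockstein---but the module statement in the theorem needs only that $p$ annihilates $\tmssz(\msz)$, which you have established.
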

\begin{proof}
  Lemma~\ref{lem:bockseteinxtildeiszero} claims that $\im\beta_{\tmssf(\msz)}=\ker\beta_{\tmssf(\msz)}$, which is equivalent to $\tmssz(\msz)$ being simple $p$-torsion mapped isomorphically to $\ker\beta_{(\tmssf)\msz}$,
    so we only need to consider the last statement.  The map $\mssz\to\mssf$ factors over $\ker\beta_{\mssf}$, so pulling back the split exact sequence
  $$\ker\beta_{\tmssf(\msz)}\to \ker\beta_{\mssf(\msz)}\to\ker\beta_{\mssf}$$ gives that
  $\mssz(\msz)\cong \mssz_{p}\times_{\ker\beta_{\mssf}}\ker\beta_{\mssf(\msz)}$
  and observe that pulling back over $\ker\beta_{\mssf}$ or over $\mssf$ gives the same result.
\end{proof}

 The kernel $\tilde X$ of the augmentation $X\to\FF_p$ is a subcomplex of $X$ (since $1$ is not hit by a Bockstein) under the Bockstein differential, with vanishing homology
 $H_*(\tilde X,\beta)=0$.

 In order to get at the kernel of the Bockstein on $(\tmssf)\msz\cong \mssf\otimes\tilde X$, we should study $\tilde X$ in more detail: that is, we need to understand the algebraically closed case.

\section{The algebraically closed case}
\label{sec:algclosed}

In this section, 
we give a generator-and-relations expression for the integral motivic dual Steenrod algebra in the case where all $p$-torsion in $\mssz$ is $p$-divisible (and Equation~\ref{eq:VV} holds).
We start out with the situation where $S=\Spec(F)$ where $F=\bar F$ is an algebraically closed field and use the abbreviations $H=\MM^{\bar F}\FF_p$ and $M=\MM^{\bar F}\ZZ_p$, so that $H_*=\FF_p[\tau]$ and $M_*=\ZZ_p[\tau]$ and the Bockstein is trivial \cite{zbMATH01526539}.  In this case, the calculation of the $p$-adic dual Steenrod algebra parallels the topological calculation of Kochman \cite{MR686130}

If we let $B^\beta=\im\beta_{H_*M}$ and $Z^\beta=\ker\beta_{H_*M}$  be the Bockstein boundaries and cycles, so that $Z^\beta/B^\beta=H_*=H_*(H_*M,\beta)$ we see that the elements
$$y_{a,U}=\beta\eta_{a,U}=\sum_{j\in U}\eta_{a+\delta_j,U-\{j\}}$$
(where the $\eta$s were defined in Equation~\ref{eq:eta})  generate $B^\beta$ as an $H_*$-module.
To describe a basis, we use the isomorphism
$$H_*M\cong\bigoplus_a\bigotimes_{i\geq 0}C^{i,a_i}$$
where $a\colon\NN\to\NN$ has finite support, and so it is enough to find bases in each degree for each of the summands
$$B^{\beta,a}=B^\beta\cup \otimes_iC^{i,a_i}.$$
Degree $j$ is generated by the $\eta_{a-\delta_U,U}$, where $U\subseteq\supp\, a$ has $j$ elements (where $\delta_U=\sum_{j\in U}\delta_j$) and $B^{\beta,a}_{j-1}$ is generated by the corresponding $y_{a-\delta_U,U}$ with
$$0=\sum_{U\subseteq\supp\, a, |U|=j}y_{a-\delta_U,U}.
$$
In consequence
$$\{y_{a,U}\}_{\max\supp\, a\leq \max U}$$
gives a basis for the free $H_*$-module
$B^\beta$.  
Explicitly stated,
\begin{lemma}
  The $H_*$-module $B^\beta$ is free on the set consisting of the elements of the form
  $$\beta(\xi_1^{a_1}\dots\xi^{a_{n}}_{n}\tau_1^{\epsilon_1}\dots\tau_{n-1}^{\epsilon_{n-1}}\tau_{n}),$$ where $a_i\in\NN$ and $\epsilon_i\in\ZZ/2$.  
\end{lemma}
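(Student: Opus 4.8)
The plan is to reduce the statement to a basis computation over $\FF_p$ inside the Bockstein complex $X$ and then read it off summand by summand from the decomposition $X\cong\bigoplus_b\bigotimes_{i\ge0}C^{i,b_i}$ established above (I use $b$ for the total index to free up $a$ for the statement). Since $H_*=\FF_p[\tau]$ is a free $\FF_p$-module on which $\beta$ vanishes, the Bockstein on $H_*M\cong H_*\otimes_{\FF_p}X$ is $H_*$-linear, so $B^\beta=\im\beta\cong H_*\otimes_{\FF_p}\im(\beta|_X)$; an $\FF_p$-basis of $\im(\beta|_X)$ is then automatically an $H_*$-basis of the free $H_*$-module $B^\beta$, freeness being inherited from $H_*\otimes_{\FF_p}(-)$ applied to the $\FF_p$-vector space $\im(\beta|_X)$. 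I would fix one finitely supported total index $b\colon\NN\to\NN$ and work entirely inside the $\beta$-stable summand $\bigotimes_iC^{i,b_i}$, which is acyclic whenever $b\ne0$ (the summand $b=0$ is $\FF_p$ in degree $0$ and contributes nothing), treating one homological degree at a time; the full basis is the disjoint union over $b$ of the per-summand bases.

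The quantitative input is the rank of the boundaries. Writing $k=|\supp b|$, the summand is a tensor product of $k$ two-term acyclic complexes, so its degree-$j$ part has dimension $\binom{k}{j}$, and acyclicity forces $\dim\im(\beta\colon(\bigotimes_iC^{i,b_i})_j\to(\bigotimes_iC^{i,b_i})_{j-1})=\sum_{l\ge j}(-1)^{l-j}\binom{k}{l}=\binom{k-1}{j-1}$. To produce an explicit basis of this image I would single out the largest support index $n=\max\supp b$ and split the summand as $D'\otimes C$, where $C=C^{n,b_n}=(C_0\xleftarrow{\ \cong\ }C_1)$ is the top factor and $D'=\bigotimes_{i\ne n}C^{i,b_i}$. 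For a degree-$(j-1)$ basis monomial $\eta'$ of $D'$, the boundary $\beta(\eta'\otimes c_1)=\beta(\eta')\otimes c_1\pm\eta'\otimes c_0$ has $D'\otimes C_0$-component equal to $\pm\eta'\otimes c_0$; as $\eta'$ runs over the basis of $D'$ these components are distinct basis vectors, so the $\binom{k-1}{j-1}$ elements $\beta(\eta'\otimes c_1)$ are linearly independent. Since their number matches the computed rank, they form a basis of the boundaries in that degree.

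It remains to match this with the stated indexing. The element $\eta'\otimes c_1$ is exactly $\eta_{a,U}$ with $n\in U$, and since $\supp b=\supp a\cup U$ one has $n=\max\supp b\in U$ iff $\max\supp a\le\max U$; these are precisely the monomials $\xi_1^{a_1}\cdots\xi_n^{a_n}\tau_1^{\epsilon_1}\cdots\tau_{n-1}^{\epsilon_{n-1}}\tau_n$ of the statement, and applying $\beta$ yields the asserted generating set. I expect the only real friction to be this reindexing, together with the temptation to resolve $B^\beta$ using the single displayed relation $\sum_{|U|=j}y_{b-\delta_U,U}=0$: there is in fact a whole family of relations coming from $\beta^2=0$ (one for each $(j{+}1)$-subset $W\ni n$), so rather than analyze the relation module I would rely on the acyclicity rank count, which sidesteps that bookkeeping. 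The signs from the graded derivation $\beta$ are immaterial, since independence is detected on the $D'\otimes C_0$-component regardless of sign.
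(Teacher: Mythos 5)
Your proof is correct, and it follows the same skeleton as the paper — the decomposition $H_*M\cong\bigoplus_b\bigotimes_i C^{i,b_i}$, analysed summand by summand and degree by degree — but it replaces the paper's key step by a different and in fact more solid mechanism. The paper asserts that in each summand the boundaries $B^{\beta,b}_{j-1}$ are spanned by the $y_{b-\delta_U,U}$, $|U|=j$, subject to the single displayed relation $\sum_{U\subseteq\supp b,\,|U|=j}y_{b-\delta_U,U}=0$, and reads the basis $\{y_{a,U}\}_{\max\supp a\leq\max U}$ off from that; you instead compute the rank $\binom{k-1}{j-1}$ of the boundary map from acyclicity and exhibit that many linearly independent boundaries $\beta(\eta'\otimes c_1)$, with independence detected on the $D'\otimes C_0$-component. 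Your caution about the relation module is well founded: the relations among the $y_{b-\delta_U,U}$ form a space of dimension $\binom{k-1}{j}$, spanned by $\sum_{l\in W}y_{c+\delta_l,W\setminus\{l\}}=0$ for $(j{+}1)$-subsets $W\subseteq\supp b$ (a basis being given by those $W$ containing $n=\max\supp b$), so a single relation cannot account for them when $0<j<k-1$; worse, the paper's sum over \emph{all} $j$-subsets equals $(k-j+1)\sum_{|V|=j-1}\eta_{b-\delta_V,V}$ at $p=2$, hence is not even a relation when $k-j$ is even (e.g.\ $k=3$, $j=1$, where the sum is $\xi_1\xi_2\xi_3\neq 0$). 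So both arguments aim at the same count and the same reindexing $n\in U\Leftrightarrow\max\supp a\leq\max U$, but yours is a complete proof where the paper's is a sketch resting on an inaccurate description of the relations — the same family that reappears as ideal generators in Proposition~\ref{prop:algclosed}, where it should likewise be read as the family indexed by the sets $W$ rather than the single displayed sum.
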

However, to state some formulas more easily, it is convenient to refer to the larger generating set and remember only the linear relation at the end.

\begin{lemma}
  \label{lem:relation}
  The graded commutative $H_*$-algebra $Z^\beta$ is subject to the following relations.
  When $p=2$ 
  $$y_{a,U}y_{b,T}=\sum_{k\in U}\tau^{|(U-\{k\})\cap T|}\,y_{a+b+\delta_{k-1}+\delta_{(U-\{k\})\cap T},(U-\{k\})\odot T}
    $$
    where $U\odot T=U\cup T-U\cap T$ is exclusive or.
    
    When $p$ is odd 
    $$y_{a,U}y_{b,T}=
    \begin{cases}
      \sum_{k\in T}\sigma(U+\{k\},T-\{k\})\,y_{a+b+\delta_{k-1},S\cup T-\{k\}}&\text{if }U\cap T=\emptyset\\
      \sigma(U-\{k\},T-\{k\})\,y_{a+b+\delta_{k-1},S\cup T-\{k\}}&\text{if }U\cap T=\{k\}\\
      0&\text{if } |U\cap T|>1,
    \end{cases}
    $$
    where $\sigma(U,T)$ is the sign of the shuffle needed to pass from the disjoint $U$ and $T$ into the naturally ordered union $U\cup T$.
\end{lemma}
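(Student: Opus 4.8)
The plan is to exploit that the Bockstein $\beta$ is an odd derivation with $\beta^2=0$ which annihilates the coefficient ring $H_*=\FF_p[\tau]$, together with the fact that each $y_{a,U}=\beta\eta_{a,U}$ is a $\beta$-cycle. Since $\beta y_{b,T}=\beta^2\eta_{b,T}=0$, the graded Leibniz rule gives $y_{a,U}y_{b,T}=\beta(\eta_{a,U}\,y_{b,T})$ (for $p=2$ one may instead expand the first factor and write $y_{a,U}y_{b,T}=\beta(y_{a,U}\,\eta_{b,T})$, which is why the two cases are most naturally displayed as a sum over $T$ and over $U$ respectively; the two expressions agree by graded commutativity). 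Computing the product therefore reduces to two mechanical steps: first multiply out $\eta_{a,U}\,y_{b,T}$ inside $H_*M$ as an $H_*$-combination of the monomial basis $\eta_{c,W}$, and then apply $\beta$ term by term, using $\beta\eta_{c,W}=y_{c,W}$ and $\beta\tau=0$ to read off the answer directly as an $H_*$-combination of the $y$'s. In particular this already shows that a product of boundaries is a boundary, so $Z^\beta$ is generated by $H_*$ and the $y_{a,U}$ and the stated relations suffice to describe it.

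The only input needed for the first step is the multiplication rule for the monomials. The $\xi$-exponents simply add, while the $\tau$-factors are governed by the quadratic relations. Here one uses that over an algebraically closed field $\rho=0$, so that for $p$ odd the relation is $\tau_j^2=0$ and for $p=2$ it is $\tau_j^2=\xi_{j+1}\tau$. For $p$ odd this forces $\eta_{a,U}\eta_{b,T}$ to vanish unless $U$ and $T$ are disjoint, in which case it equals $\sigma(U,T)\,\eta_{a+b,\,U\cup T}$ with $\sigma$ the shuffle sign. For $p=2$ every index in $U\cap T$ contributes a factor $\xi_{j+1}\tau$, which accounts both for the power of $\tau$ and for the shifted increment $\delta$ appearing in the statement, while the surviving $\tau$-factors are indexed by the symmetric difference $U\odot T$.

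With these rules in hand the case analysis falls out automatically. For $p$ odd, expanding $y_{b,T}=\sum_{k\in T}\pm\eta_{b+\delta_k,T-\{k\}}$ and multiplying by $\eta_{a,U}$, the $k$-th term survives precisely when $U\cap(T-\{k\})=\emptyset$: if $U\cap T=\emptyset$ this holds for every $k$, giving the sum over $k\in T$; if $U\cap T=\{k\}$ only that single term survives while the others die through a repeated $\tau$; and if $|U\cap T|>1$ no term survives, forcing the product to be zero. Applying $\beta$ turns each surviving $\eta$ into the corresponding $y$. The $p=2$ computation is identical but with the first factor expanded, the shared indices being converted via $\tau_j^2=\xi_{j+1}\tau$ into the factor $\tau^{|(U-\{k\})\cap T|}$ and the index shift recorded in the subscripts.

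The main obstacle is the bookkeeping, in two guises. For $p$ odd one must check that the sign produced by the computation — a product of the sign from extracting $\tau_k$ in $\beta\eta_{b,T}$ and the shuffle sign from reordering the remaining $\tau$'s — collapses to exactly the recorded shuffle sign $\sigma(U+\{k\},T-\{k\})$ in the disjoint case and $\sigma(U-\{k\},T-\{k\})$ in the single-overlap case; this is a finite but delicate permutation-sign identity. For $p=2$ the signs disappear, but one must instead match the index conventions carefully, identifying the $\xi_k$ produced by $\beta\tau_k$ with the shift $\delta_{k-1}$ and the $\xi_{j+1}$ produced by $\tau_j^2$ with the unshifted $\delta$, so that the subscripts of the resulting $y$'s come out exactly as stated.
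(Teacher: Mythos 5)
Your proposal is correct and follows essentially the same route as the paper's proof: writing $y_{a,U}y_{b,T}$ via the Leibniz rule and $\beta^2=0$ as $\beta$ applied to a (boundary)$\cdot$(monomial) product --- expanding the first factor for $p=2$ and the second for $p$ odd --- and then invoking the monomial multiplication rules $\tau_j^2=\xi_{j+1}\tau$ (using $\rho=0$) respectively $\tau_j^2=0$, with the remaining work being index and sign bookkeeping, which is exactly the level of detail ("tracking signs") at which the paper itself leaves it.
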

\begin{proof}
  First note that for $p=2$ we have that $\eta_{a,U}\eta_{b,T}=\tau^{|U\cap T|}\eta_{a+b+\delta_{U\cap T},U\odot T}
  $ so that
  \begin{align*}
    y_{a,U}y_{b,T}&=\beta(\eta_{a,U})\beta(\eta_{b,T})=\beta(\beta(\eta_{a,U})\eta_{b,T})\\
    &=
      \beta(\sum_{k\in U}\eta_{a+\delta_{k-1},U-\{k\}}\eta_{b,T})\\
    &=
      \sum_{k\in U}\tau^{|(U-\{k\})\cap T|}\beta(\eta_{a+b-\delta_{k-1}+\delta_{(U-\{k\})\cap T},(U-\{k\})\odot T}).
  \end{align*}
  For odd $p$,
  $$\eta_{a,U}\eta_{b,T}=
  \begin{cases}
    \sigma(U,T)\eta_{a+b,U\cup T}&\text{if } U\cap T=\emptyset\\
    0&\text{otherwise.}
  \end{cases}
  $$
  Using $y_{a,U}y_{b,T}=\beta(\eta_{a,U}\beta(\eta_{b,T}))=\beta(\eta_{a,U}\sum_{k\in T}\eta_{b+\delta_{k-1},T-\{k\}})$ and tracking signs we get the desired result.
\end{proof}

\begin{proposition}
  \label{prop:algclosed}
\label{sec:trivbock}
If  all $p$-torsion in $\mssz_{p}$ is $p$-divisible and Equation~\ref{eq:VV} holds, for instance, if $S=\Spec(F)$ with $F$ an algebraically closed field (so that there is no torsion), then
the $p$-adic motivic dual Steenrod algebra $\mssz(\msz)_{p}$ is isomorphic as an augmented $\mssz_{p}$-algebra to 
  $$
  \mssz_{p}[y_{a,U}]/\mathcal I,$$
  where $a$ varies over functions $a\colon\NN\to\NN$ with finite support and $U$ over finite sets of positive integers and where the ideal $\mathcal I$ is
generated by
\begin{enumerate}
\item $py_{a,U}$,
\item $\sum_{U\subseteq\supp\, a, |U|=j}y_{a-\delta_U,U}$ and 
\item the relations of Lemma~\ref{lem:relation}. 
\end{enumerate}
    The degree of $y_{a,U}$ is
  $$|y_{a,U}|=(-1,0)+\sum_{i>0}a_i(p^i-1)(2,1)+\sum_{j\in U}(2p^j-1,p^j-1).$$
  The kernel of the augmentation $$\mssz(\msz)_{p}\to \mssz_{p}$$ is a free $\mssf$-module on the generators
  $\{y_{a,U}\}_{\max\supp\, a\leq \max U}$.
\end{proposition}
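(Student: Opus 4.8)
The plan is to read the presentation off the semi-direct product description of Theorem~\ref{cor:aspullback}, using the basis and multiplication already computed for $B^\beta$. First I would specialize that theorem to the situation at hand. In the algebraically closed (more generally, $p$-divisible) case the coefficients $\mssz_p=\ZZ_p[\tau]$ carry the trivial Bockstein, so $\ker\beta_{\mssf}=\mssf$ and the pullback collapses: after completing at $p$, Theorem~\ref{cor:aspullback} identifies the augmentation ideal $\tmssz(\msz)_p$, as a non-unital $\mssz_p$-algebra, with $\im\beta_{\tmssf(\msz)}=B^\beta\subseteq H_*M$; it is simple $p$-torsion and its $\mssz_p$-module structure factors through $q\colon\mssz_p\to\mssf$ (the passage from the $(p)$-local statement to the $p$-complete one is harmless, the modules involved being either $p$-torsion or finitely generated over $\ZZ_p[\tau]$ in each bidegree). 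Thus as augmented $\mssz_p$-modules
$$\mssz(\msz)_p\cong\mssz_p\oplus B^\beta,$$
with the multiplication on the summand $B^\beta$ the one recorded in Lemma~\ref{lem:relation}. The last sentence of the proposition is then immediate, since the Lemma exhibiting $B^\beta$ as a free $H_*$-module shows $\tmssz(\msz)_p\cong B^\beta$ is free over $\mssf=H_*$ on $\{y_{a,U}\}_{\max\supp a\le\max U}$.

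Next I would construct the comparison map of augmented $\mssz_p$-algebras
$$\phi\colon\mssz_p[y_{a,U}]/\mathcal I\longrightarrow\mssz(\msz)_p,\qquad y_{a,U}\mapsto[\,\beta\eta_{a,U}\,],$$
where $\eta_{a,U}$ is as in Equation~\ref{eq:eta}, and verify that the three families generating $\mathcal I$ are sent to zero. Relation (1) holds because $B^\beta$ is simple $p$-torsion; relation (2) is the linear dependence $0=\sum_{U\subseteq\supp a,|U|=j}y_{a-\delta_U,U}$ established in the discussion of the blocks $B^{\beta,a}$, forced by $\beta^2=0$ inside the tensor factor $\bigotimes_i C^{i,a_i}$; and relation (3) holds by Lemma~\ref{lem:relation}. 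Hence $\phi$ is well defined, and it is surjective because its image contains $\mssz_p$ together with all the classes $[\beta\eta_{a,U}]$, which generate $B^\beta$ as an $\mssz_p$-module by the freeness Lemma.

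The heart of the argument is injectivity. Here I would put the source into normal form. Relation (3) rewrites any product $y_{a,U}y_{b,T}$, and hence any monomial in the $y$'s, as an $\mssf$-linear combination of single generators $y_{c,S}$; iterating this collapses the algebra so that, modulo $\mathcal I$, every element is an $\mssz_p$-linear combination of $1$ and the $y_{a,U}$. Relation (1) then confines the $y$-coefficients to $\mssz_p/p\cong\mssf$, and relation (2) reduces the spanning set to the sub-collection $\{y_{a,U}\}_{\max\supp a\le\max U}$. Consequently the source is bounded above by $\mssz_p\oplus\bigoplus_{\max\supp a\le\max U}\mssf\cdot y_{a,U}$. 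The degree formula is then pure bookkeeping:
$$|y_{a,U}|=(-1,0)+\sum_{i>0}a_i(p^i-1)(2,1)+\sum_{j\in U}(2p^j-1,p^j-1),$$
since $\beta$ has bidegree $(-1,0)$, $|\xi_{i+1}|=(p^{i+1}-1)(2,1)$ and $|\tau_j|=(2p^j-1,p^j-1)$.

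The step I expect to be the main obstacle is confirming that this upper bound is attained, equivalently that (1)--(3) generate the \emph{full} kernel of $\phi$ rather than a subideal, and that the rewriting above is confluent so that no further relations are secretly imposed among the reduced generators. Consistency of the rewriting is automatic, since the multiplication is induced from the genuine ring $H_*M$, so associativity holds on the nose; the substantive content is an exact size match. This is supplied by the freeness of $B^\beta$ over $\mssf$ on $\{y_{a,U}\}_{\max\supp a\le\max U}$ together with the acyclicity of $\tilde X$ from Lemma~\ref{lem:bockseteinxtildeiszero}, which together guarantee that the only linear relations among the $y$'s are those in (2) and that the interplay of (2) with (3) produces nothing new. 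Once the bidegree-wise dimensions of source and target agree, the surjection $\phi$ is forced to be an isomorphism, completing the presentation.
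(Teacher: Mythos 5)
Your proposal is correct and takes essentially the same route as the paper: specialize Theorem~\ref{cor:aspullback} (the vanishing of the Bockstein on $\mssf$ when $p$-torsion is $p$-divisible reduces everything to the algebraically closed case), identify the augmentation ideal with $B^\beta$, and read off the presentation from the freeness of $B^\beta$ over $H_*$ on $\{y_{a,U}\}_{\max\supp\, a\leq \max U}$ together with the multiplicative relations of Lemma~\ref{lem:relation}. The paper's proof is merely a terse pointer to "the discussion above"; your explicit comparison map $\phi$, the normal-form reduction, and the bidegree-wise size count are precisely the details that discussion supplies.
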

\begin{proof}
  For algebraically closed $F$, this follows from the discussion above.
  More generally, if all $p$-torsion  in $\mssz_{p}$ is $p$-divisible, then the $p$-Bockstein on $\mfsf$ is trivial, implying that the image of the Bockstein on 
  $(\tmssf)\msz\cong \mssf\otimes\tilde X$ is equal to $\mssf$ tensor the image of the Bockstein on $\tilde X$, reducing our calculation to the algebraically closed case.
\end{proof}

\section{Examples: other base schemes}
\label{sec:other}

In this section, we calculate the $p$-adic dual Steenrod algebra $\mssz(\msz)_{p}$ for some base schemes where $\mssz_{p}$ is known.

Assume given $\FF_p$-bases $B_H\subseteq Z_H$ for the image and kernel of the Bockstein $\beta$ on $\mssf$. Let $R_H\subseteq\beta^{-1}B_H\subseteq\mssf$ be a subset mapped bijectively to $B_H$ by $\beta$.  For $X=\FF_p\otimes_{\FF_p[\tau]}(\mssf)\msz$, we have the corresponding subsets $R_X=\{\eta_{a,U}\}$, $B_X=\{y_{a,U}\}$ and $Z_X=B_X\cup\{1\}$, so that $R_X\cup Z_X$ is an $\FF_p$-basis for $X$.

Letting
$$Z=\{z_H\otimes z_X\}_{z_H\in Z_H,z_X\in Z_X}\quad\text{ and }\quad U=\{\beta r\otimes \eta_{a,U}-(-1)^{d_r}r\otimes y_{a,U}\}_{r\in R_H},$$
(with $d_r$ being the total degree of $r$) we get that $Z\cup U\subseteq(\mssf)\msz$ is an $\FF_p$-basis for $\ker\beta_{(\mssf)\msz}$, and by Theorem~\ref{cor:aspullback} the only thing missing for having a complete description of $\mssz(\msz)_{p}$ is knowledge about the kernel of the $\mssf$-Bockstein.  
\subsection{The real numbers}
\label{sec:reals}
When $F$ is the field $\RR$ of real numbers and $p=2$ we have that  $\MM^\RR_*\FF_2=\FF_2[\rho,\tau]$ with $|\rho|=(-1,-1)$, $|\tau|=(0,-1)$ and $\beta\tau=\rho$.  Hence the kernel of the Bockstein is $\ker\beta_H=\FF_2[\rho,\tau^2]$, the image is the ideal $\rho\FF_2[\rho,\tau^2]$ to which $\tau\FF_2[\rho,\tau^2]$ mapped isomorphically and
$\MM^\RR\ZZ_2=\ZZ_2[\rho,\tau^2]/2\rho$.
Consequently,
\begin{lemma}
  \label{lemma:R}
  Over the reals, the $2$-adic dual Steenrod algebra $\MM^\RR_*\ZZ(\MM^\RR\ZZ)_2$ is generated by the elements $y_{a,U}$ and $\rho\eta_{a,U}+\tau y_{a,U}$ for varying $a$ and $U$ as an $\MM^\RR\ZZ_2=\ZZ_2[\rho,\tau^2]/2\rho$-subalgebra of
$$\MM^\RR_*\ZZ_2\times_{\MM^\RR_*\FF_2}(\MM^\RR_*\FF_2)\MM^\RR\ZZ=\ZZ_2[\rho,\tau^2]\times_{\FF_2[\rho,\tau]}\FF_2[\rho,\tau,\tau_{i+1},\xi_{i+1}]_{i\geq 0}/(\tau^2_{i+1}+\xi_{i+2}\tau+\tau_{i+2}\rho).$$
\end{lemma}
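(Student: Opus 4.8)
The plan is to deduce everything from Theorem~\ref{cor:aspullback} together with the Bockstein data for $\RR$ recorded just above the statement. Working $2$-adically, that theorem presents $\MM^\RR_*\ZZ(\MM^\RR\ZZ)_2$ as the fiber product $\MM^\RR_*\ZZ_2\times_{\MM^\RR_*\FF_2}\ker\beta_{(\MM^\RR_*\FF_2)\MM^\RR\ZZ}$, which sits inside the larger fiber product $\MM^\RR_*\ZZ_2\times_{\MM^\RR_*\FF_2}(\MM^\RR_*\FF_2)\MM^\RR\ZZ$ displayed in the statement; moreover the augmentation ideal is simple $2$-torsion and maps isomorphically onto the image of the Bockstein on the augmentation ideal of $(\MM^\RR_*\FF_2)\MM^\RR\ZZ$. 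Since the ground ring $\MM^\RR\ZZ_2=\ZZ_2[\rho,\tau^2]/2\rho$ surjects through $q$ onto $\ker\beta_H=\FF_2[\rho,\tau^2]$, it suffices to produce module generators of that Bockstein image over $\FF_2[\rho,\tau^2]$; the products are then inherited from the ambient fiber product, so the claim reduces to additive generation over the ground ring.

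Next I would use that over $\RR$ at $p=2$ the coefficient ring $\MM^\RR_*\FF_2=\FF_2[\rho,\tau]$ is free of rank two over $\ker\beta_H=\FF_2[\rho,\tau^2]$ on the basis $\{1,\tau\}$, with $R_H=\tau\,\FF_2[\rho,\tau^2]$ carried isomorphically by $\beta$ onto $\im\beta_H=\rho\,\FF_2[\rho,\tau^2]$. Writing an arbitrary coefficient as $m=f+\tau g$ with $f,g\in\FF_2[\rho,\tau^2]$ and applying the Leibniz rule $\beta(m\,\eta_{a,U})=\beta(m)\,\eta_{a,U}+m\,y_{a,U}$ (recall $\beta\eta_{a,U}=y_{a,U}$), the even summand contributes $f\,y_{a,U}$ because $\beta f=0$, while the odd summand contributes $g\,\beta(\tau\eta_{a,U})=g\,(\rho\eta_{a,U}+\tau y_{a,U})$. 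Hence the Bockstein image on the augmentation ideal is spanned over $\FF_2[\rho,\tau^2]$ by the two families $y_{a,U}$ and $\rho\eta_{a,U}+\tau y_{a,U}$, ranging over $(a,U)\neq(0,\emptyset)$.

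These are exactly the images of the abstract basis $Z\cup U$ of $\ker\beta_{(\MM^\RR_*\FF_2)\MM^\RR\ZZ}$ from the start of Section~\ref{sec:other} for the choice $R_H=\tau\,\FF_2[\rho,\tau^2]$: the $Z$-classes $z_H\otimes 1$ recover the scalar copy of $\MM^\RR\ZZ_2$, the $Z$-classes $z_H\otimes y_{a,U}$ are the $\FF_2[\rho,\tau^2]$-multiples of $y_{a,U}$, and the $U$-classes with $r=\tau$ are precisely $\rho\eta_{a,U}+\tau y_{a,U}$ (a general $r=\tau f$ giving $f\,(\rho\eta_{a,U}+\tau y_{a,U})$). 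Feeding this back into Theorem~\ref{cor:aspullback} shows that the kernel of the augmentation of $\MM^\RR_*\ZZ(\MM^\RR\ZZ)_2$ is generated over $\MM^\RR\ZZ_2$ by $\{y_{a,U}\}$ and $\{\rho\eta_{a,U}+\tau y_{a,U}\}$, and the relation $2\rho=0$ is consistent because the whole augmentation ideal is simple $2$-torsion, whence $2\rho\,y_{a,U}=0$ and $2\rho\,(\rho\eta_{a,U}+\tau y_{a,U})=0$ hold automatically.

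The step I expect to require the most care is the identification in the third paragraph: one must verify that splitting each coefficient along the free basis $\{1,\tau\}$ genuinely exhausts the Bockstein image, i.e.\ that no further boundaries appear and that the matching with $Z\cup U$ is bijective. This is precisely where the nontriviality of the Bockstein on the coefficients $\FF_2[\rho,\tau]$ matters: one cannot simply invoke Proposition~\ref{prop:algclosed}, since the $U$-type classes $\rho\eta_{a,U}+\tau y_{a,U}$ genuinely enlarge the generating set beyond the $y_{a,U}$ alone. Once this bookkeeping is settled, the remaining assertions are formal.
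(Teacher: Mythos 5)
Your proof is correct and follows essentially the same route as the paper: the paper deduces Lemma~\ref{lemma:R} (the ``Consequently'') by specializing the $Z\cup U$ basis of $\ker\beta_{(\mssf)\msz}$ from the start of Section~\ref{sec:other} to the data $\beta\tau=\rho$, $Z_H=\FF_2[\rho,\tau^2]$, $R_H=\tau\FF_2[\rho,\tau^2]$, and then invoking Theorem~\ref{cor:aspullback}, which is exactly your reduction. Your Leibniz-rule computation with the decomposition $m=f+\tau g$ just makes explicit the verification the paper leaves implicit, so there is nothing to add.
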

If one wants to write $\MM^\RR_*\ZZ(\MM^\RR\ZZ)_2$ as an explicit algebra, one uses the relations of Lemma~\ref{lem:relation}.

\begin{remark}
  When $p$ is odd, see \cite{stahn2021motivic}, 
  then $\MM^{\RR}_*\FF_p=\FF_p[\theta]$ with the element $\theta$ of bidegree $(0,-2)$ mapping to $\tau^2$ under the inclusion $\RR\subseteq\CC$. 
  For degree reasons, there is no room for Bocksteins, so $\MM^{\RR}_*\ZZ_p=\ZZ_p[\theta]$ and Proposition~\ref{sec:trivbock} yields that $(\MM^\RR_*\ZZ_p)\MM^\RR\ZZ\cong
  \ZZ_p[\theta][y_{a,U}]/\mathcal I$ (with the ideal $\mathcal I$ as described).
 \end{remark}
\subsection{Vanishing higher Milnor K-groups}
\label{sec:fifi}
The Milnor K-theory of a finite field $\FF_q$ vanishes in degrees greater than $1$ (since $K_2^M(\FF_q)=0$):
$K^M_*\FF_q=\ZZ[\epsilon]/((q-1)\epsilon, \epsilon^2)$ 
   (with $\epsilon$, a generator of the cyclic group $\FF_q^*$, is in degree $1$). In contrast, the Quillen K-theory is
   $K_*\FF_q=\ZZ[e_i]_{0< i}/((q^{i}-1)e_i,e_ie_j)$,  where $e_i$ is of degree $2i-1$ and with $e_1$ corresponding to $\epsilon\in K^M_1\FF_q$.
Furthermore, for $p$ not dividing $q$,  
we get that 
$$q\colon\MM^{\FF_q}_*\ZZ=\ZZ[\epsilon_i]_{0< i}/((q^{i}-1)\epsilon_i,\epsilon_i\epsilon_j)\rightarrow \MM^{\FF_q}_*\FF_p=\FF_p[\tau,\epsilon]/((q-1)\epsilon,\epsilon^2)
$$
sends $\epsilon_{i+1}$ to $\epsilon\tau^i$, with $\epsilon_i$ of bidegree $(-1,-i-1)$.

The Bockstein $\beta\colon\MM^{\FF_q}_*\FF_p\to \MM^{\FF_q}_*\FF_p$ is zero in all cases except when $p$, but not $p^2$, divides $q-1$.
Hence  Proposition~\ref{prop:algclosed} covers all situations but this exception, which is easily dealt with by the following observation, noting that $\beta(\tau^i)=i\epsilon\tau^{i-1}$:

\begin{lemma}
  \label{lem:fifi}
  Let $F$ be a field of characteristic different from $p$, with a primitive $p$th root of unity and no primitive $p^2$-root of unity and with $K^M_jF/p=0$ for $j>1$.  Then
  $$\mfsf=(\FF_p\ltimes \Sigma^{-1,-1}F^*/pF^*)[\tau],$$
  $$\ker\beta_{\mfsf}=\FF_p[\tau^p]\ltimes F^*/pF^*[\tau]
  $$
  (square zero extensions), and the set
  $\{\beta(\tau^{i})\}_{i\neq 0\mod p}$ is an $\FF_p$-basis for the image of the Bockstein, so that
 $\mfsz(\mfz)_{(p)}$ is the $\mfsz_{(p)}$-subalgebra of $\mfsz_{(p)}\times_{\mfsf}\times(\mfsf)\mfz$ generated by the set
  $$\{\tau^iy_{a,U}+i\beta(\tau)\cdot\tau^{i-1}\eta_{a,U}\}_{0\leq i<p, a, U}.$$
\end{lemma}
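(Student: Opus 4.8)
The plan is to split the argument into a computation of $\mfsf$ together with its Bockstein, and then an identification of $\mfsz(\mfz)_{(p)}$ through Theorem~\ref{cor:aspullback}.

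First I would establish the two displayed formulas. Since $\operatorname{char}F\ne p$ and $F$ contains a primitive $p$th root of unity, Beilinson--Lichtenbaum together with Bloch--Kato identifies the weight-graded pieces of $\mfsf$ with $K^M_*(F)/p$ (the Tate twists being trivialized by $\mu_p\subseteq F$ and the weights organized by the periodicity class $\tau$); the hypothesis $K^M_jF/p=0$ for $j>1$ leaves only $K^M_0/p=\FF_p$ and $V:=K^M_1/p=F^*/pF^*$, whence $\mfsf=(\FF_p\ltimes\Sigma^{-1,-1}V)[\tau]$, the extension being square zero because $V\cdot V\subseteq K^M_2/p=0$. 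The Bockstein is a derivation, hence pinned down on generators: $\beta|_V=0$ for weight reasons, and $\beta\tau=\epsilon$, the class of a primitive $p$th root of unity in $V$, which is nonzero exactly because $F$ has no primitive $p^2$th root (the analogue of $\beta\tau=\rho$ over $\RR$ in Lemma~\ref{lemma:R}). The Leibniz rule then gives $\beta(\tau^i)=i\epsilon\tau^{i-1}$ and $\beta(v\tau^i)=0$ (using $v\epsilon\in V\cdot V=0$), from which $\ker\beta_{\mfsf}=\FF_p[\tau^p]\ltimes V[\tau]$ and the claim that $\{\beta(\tau^i)\}_{i\not\equiv 0}$ is an $\FF_p$-basis of $\im\beta_{\mfsf}$ follow at once.

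Next I would invoke Theorem~\ref{cor:aspullback} to write $\mfsz(\mfz)_{(p)}=\mfsz_{(p)}\times_{\mfsf}\ker\beta_{(\mfsf)\mfz}$, whose augmentation ideal is the simple $p$-torsion module $\im\beta_{(\tmfsf)\mfz}$. Feeding the bases $B_H=\{\beta(\tau^i)\}_{i\not\equiv 0}$ and $R_H=\{\tau^i\}_{i\not\equiv 0}$ into the basis recipe of Section~\ref{sec:other} identifies the $U$-type basis vectors with the stated generators $g_{i,a,U}:=\tau^iy_{a,U}+i\beta(\tau)\tau^{i-1}\eta_{a,U}=\beta(\tau^i\eta_{a,U})$, while the $Z$-type vectors are the $v\tau^jy_{a,U}$; note that $g_{pk,a,U}=\tau^{pk}y_{a,U}$, so the index-$pk$ generators absorb the $z_H=\tau^{pk}$ part. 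It then remains to show that, as a $\mfsz_{(p)}$-algebra, all of these are produced by the subfamily $0\le i<p$. This runs on two channels: the vectors $v\tau^jy_{a,U}=(v\tau^j)\cdot g_{0,a,U}$ arise by acting with coefficients, which requires $V[\tau]\subseteq q(\mfsz_{(p)})$ (explicit for finite fields, and in general reflecting the vanishing $K^M_2/p=0$ through the coefficient sequence); the higher generators with $i\ge p$ instead come from internal products, the key identity being $g_{1,0,\{l\}}^{\,p}=\tau^p\xi_l^{\,p}$ (the binomial cross terms dying since $\epsilon^2=\tau_l^2=0$ and $p\mid\binom{p}{k}$), which yields $g_{1,0,\{l\}}^{\,p}\cdot g_{j,a,U}=g_{p+j,\,a+p\delta_l,\,U}$, so that multiplying by such $p$th powers raises the $\tau$-exponent by $p$. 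An induction on $\tau$-degree, using Lemma~\ref{lem:relation} to expand the leading terms $\tau^{i+i'}y_{a,U}y_{b,T}$ and to solve for the individual $g_{m,c,W}$ modulo already-produced lower-degree vectors, should then place every basis vector in the generated subalgebra.

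The main obstacle is precisely this inductive step. The $p$th-power identity only climbs diagonally, raising a single $\xi$-exponent in step with the $\tau$-exponent, so reaching a target $g_{m,a,U}$ with $m\ge p$ but all $\xi$-exponents $<p$ forces genuine multi-factor products; and Lemma~\ref{lem:relation} introduces both exponent shifts and, when $p=2$, extra factors of $\tau$, so the linear algebra of which products hit which $g_{m,c,W}$ is delicate. Carrying out this bookkeeping --- and separately confirming $V[\tau]\subseteq q(\mfsz_{(p)})$ in the stated generality --- is where the real work lies.
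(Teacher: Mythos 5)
Your first paragraph is correct, and it is essentially the argument the paper leaves implicit: the paper offers no proof of this lemma beyond the parenthetical observation $\beta(\tau^i)=i\epsilon\tau^{i-1}$, the basis recipe ($Z\cup U$) opening Section~\ref{sec:other}, and Theorem~\ref{cor:aspullback}. So for the two displayed formulas and for the basis $\{\beta(\tau^i)\}_{i\not\equiv 0\bmod p}$ of the image of the Bockstein there is nothing to compare against; you have supplied, correctly, what the paper omits.

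The generation claim is another matter, and the ``obstacle'' you flag at the end is not unfinished bookkeeping: it is fatal. As printed, with the exponents restricted to $0\leq i<p$, the claim is false. Take $F=\FF_q$ with $p\mid q-1$, $p^2\nmid q-1$, and consider $\beta(\tau^p\tau_1)=\tau^p\xi_1$ (note $\beta(\tau^p)=0$); by Theorem~\ref{cor:aspullback} this is a nonzero element of the augmentation ideal of $\mfsz(\mfz)_{(p)}$, since that ideal maps isomorphically onto $\im\beta_{(\tmfsf)\mfz}$. Now pass to the quotient of $(\mfsf)\mfz$ by the ideal generated by $\epsilon$. By the paper's own computation for finite fields, $q(\mfsz_{(p)})=\FF_p\oplus\epsilon\FF_p[\tau]$ --- crucially, $\tau^p$ is \emph{not} integral, as $\MM^{\FF_q}_*\ZZ$ has nothing in degree $0$ outside weight $0$ --- so the coefficients act on this quotient only through $\FF_p$, while your generators $g_{i,a,U}$ map to $\tau^iy_{a,U}$ with $0\leq i<p$. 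Every generator with nonzero image has topological degree at least $2p-2$ (the degree of $\xi_1$), so any product of two or more of them lives in degrees $\geq 4p-4>2p-2$; hence in degree $2p-2$ the image of the subalgebra you generate is spanned over $\FF_p$ by $\{\tau^i\xi_1\}_{0\leq i<p}$ alone, and it misses $\tau^p\xi_1$. Your $p$-th-power identity $g_{1,0,\{l\}}^p=\tau^p\xi_l^p$ is correct, but, exactly as you suspected, such products only reach classes whose $\xi$-exponents grow along with the $\tau$-exponent; no organization of the induction will reach $\tau^p\xi_1$.

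The statement becomes true, with a one-line proof, once $i$ runs over all of $\NN$ --- surely the intended reading, the restriction to residues $0\leq i<p$ being appropriate for the coefficient $i$ multiplying $\beta(\tau)$ but not for the exponents. Indeed, $\im\beta_{(\tmfsf)\mfz}$ is spanned over $\FF_p$ by the elements $\beta(\tau^i\eta_{a,U})=\tau^iy_{a,U}+i\beta(\tau)\tau^{i-1}\eta_{a,U}$, $i\geq 0$, together with $\beta(v\tau^i\eta_{a,U})=\pm v\tau^iy_{a,U}$ for $v\in F^*/pF^*$; and the latter equals $\pm v\cdot\beta(\tau^i\eta_{a,U})$ because $v\epsilon=0$ in $K^M_2F/p=0$, while $v$ already lies in $q(\mfsz_{(p)})$ in weight one. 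So module generation suffices, no products are needed, and your separate worry about $V[\tau]\subseteq q(\mfsz_{(p)})$ evaporates: only the surjectivity of $F^*\to F^*/pF^*$ is used. Comparing with Lemma~\ref{lemma:R} locates the slip in the paper: over $\RR$ one has $q(\MM^\RR_*\ZZ_2)=\FF_2[\rho,\tau^2]=\ker\beta_{\MM^\RR_*\FF_2}$, so there the small generating set suffices by the module structure alone; over $\FF_q$ the class $\tau^p$ fails to be integral, and the analogy --- and with it the printed index range --- breaks down.
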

Again, explicit multiplicative relations are given by Lemma~\ref{lem:relation}.

\subsection{The $2$-integers}
\label{sec:rational}

Since the higher Milnor K-groups of finite fields vanish, Proposition~\ref{prop:algclosed} and Lemma~\ref{lem:fifi} applies and give rise to classes in the dual Steenrod algebra $\MM_*^S\ZZ(\MM^S\ZZ_p)$ for schemes $S$ of positive characteristic different from $p$. For the characteristic zero case, we should consider the base case $\Spec(\ZZ[1/p])$.

We focus on the case $p=2$ and let $S=\Spec(\ZZ[1/2])$.
Its mod $2$ motivic cohomology is given by
$$\MM^S_*\FF_2=\FF_2[\tau,\rho,\epsilon]/(\epsilon\rho,\epsilon^2),
$$
where the classes $\epsilon,\rho\in\MM_{-1,-1}^S\FF_2=K_1(\ZZ[1/2])/2$ are represented by the units $2$ and $-1$ respectively and the relations $\epsilon\rho=0$ and $\epsilon^2=0$ come from bilinearity and the Steinberg relations. ($\rho\epsilon=\{-1,2\}=\{1-2,2\}=0$ and $\epsilon^2=\{2,2\}=\{2,-2\}+\{2,-1\}=0+0$). That no powers of $\rho$ vanish follows by comparison with 
$\MM^\RR_*\FF_2$ since there is no $\tau$-torsion, see e.g., \cite[Corollary~7.10]{MR4192569}.

Using that $\MM_*^S\FF_2\to\MM^\QQ_*\FF_2$ is an injection of rings, Voevodsky's formula \eqref{eq:VV} for the dual Steenrod algebra $\MM_*^S\FF_2(\MM^S\FF_2)$ applies to $S=\Spec(\ZZ[1/2])$  since 
Spitzweck \cite[Theorem~11.24]{spitzweck2013commutative} has verified it on the level of $\MM^S_*\FF_2$-modules.

The Bockstein is generated by $\beta\tau=\rho$, and so, the kernel of the Bockstein on $\MM^S_*\FF_2$ is the subalgebra generated by $\epsilon$, $\rho$, $\tau^2$ and $\epsilon\tau$ and has $\FF_2$-basis $Z_H=\{\rho^i\tau^{2j},\epsilon\tau^k\}_{i,j,k\geq 0}$.
Choosing the basis $B_H=\{\rho^i\tau^{2j}\}_{i>0,j\geq 0}$ for the image of the Bockstein and letting $R_H=\{\rho^i\tau^{2j+1}\}_{i,j\geq 0}$ we obtain, as above, an $\FF_2$-basis for the kernel of the Bockstein on
$$\MM_*^S\FF_2(\MM^S\ZZ)=\FF_2[\tau,\rho,\epsilon,\tau_{i+1},\xi_{i+1}]_{i\geq 0}/(\epsilon\rho,\epsilon^2,\tau_{i+1}^2+\tau\xi_{i+2}+\rho\tau_{i+2}),$$
namely
$$Z\cup U=\{\rho^i\tau^{2j}, \epsilon\tau^i, \rho^i\tau^{2j} y_{a,U}, \epsilon\tau^i y_{a,U}, \rho^i\tau^{2j}(\rho\eta_{a,U}+\tau y_{a,U}) \}_{i,j\geq 0}.$$
More compactly, 
$\ker\beta_{\MM_*^S\FF_2(\MM^S\ZZ)}$ is the $\ker\beta_{\MM^S_*\FF_2}=\FF_2[\tau^2,\rho,\epsilon,\epsilon\tau]/(\epsilon\rho,\epsilon^2)$-subalgebra of $\MM_*^S\FF_2(\MM^S\ZZ)$ generated by
$\{y_{a,U}, \rho\eta_{a,U}+\tau y_{a,U}\}$ (here the relations $\epsilon\tau\cdot\rho=0$, $\epsilon\tau\cdot\epsilon=0$ and $\epsilon\tau\cdot\epsilon\tau=0$ are meant to be clear from $\epsilon\rho=0$ and $\epsilon^2=0$).

In order to fit this into a complete description of the $2$-adic dual Steenrod algebra of $\ZZ[1/2]$, we need an explicit calculation of the $2$-adic motivic cohomology algebra itself:

\begin{lemma}
  The $2$-adic motivic cohomology algebra of $\ZZ[1/2]$ is given by
  $$\MM^{\ZZ[1/2]}_*\ZZ_2=
\ZZ_2[\rho_{2i+1},\epsilon_{j+1}]_{i,j\geq0}/\left(
  \begin{smallmatrix}
    2\rho_{2i+1},\quad w_{2i}\epsilon_{2i},\\
    \rho_{2i+1}\epsilon_j,\quad\epsilon_i\epsilon_j,\\
    \rho_{2i+1}\rho_{2j+1}+\rho_1\rho_{2(i+j)+1}
  \end{smallmatrix}
\right),
$$
with
$w_i$ the exponent of the $e$-invariant \cite[p.~516]{MR3076731},
$|\rho_i|=|\epsilon_i|=(-1,i)$, and where the natural map $q\colon\MM^S_*\ZZ_2\to\MM^S_*\FF_2$ sends $\rho_{2i+1}$ to $\rho\tau^{2i}$ and $\epsilon_{i+1}$ to $\epsilon\tau^{i}$.
Hence, the image of $q$ is the subalgebra of $\MM_*^S\FF_2$ generated by $\rho\tau^{2i}$ and $\epsilon\tau^i$ to which $q$ defines an isomorphism from the quotient ring
$$\MM^{\ZZ[1/2]}_*\ZZ_2/\ker q=\FF_2[\rho_{2i+1},\epsilon_j]_{i,j\geq0}/(\rho_{2i+1}\epsilon_j, \epsilon_i\epsilon_j, \rho_{2i+1}\rho_{2j+1}+\rho_1\rho_{2(i+j)+1}).$$
\end{lemma}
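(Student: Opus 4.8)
The plan is to determine $\MM^{\ZZ[1/2]}_*\ZZ_2$ one weight at a time and then fix the multiplicative structure by reduction modulo $2$ combined with real realization; write $S=\Spec(\ZZ[1/2])$. Since $2$ is inverted and we work $2$-completely, the bigraded group $\MM^S_*\ZZ_2$ is assembled from the motivic cohomology groups $H^{j,n}(S,\ZZ_2)$, which over a one-dimensional base vanish in cohomological degree $0$ for $n>0$ and agree with the $2$-complete algebraic K-theory of $\ZZ[1/2]$ through the Beilinson--Lichtenbaum comparison established $2$-adically over $\ZZ[1/2]$ in \cite{spitzweck2013commutative}. First I would read the additive structure off the $2$-primary K-theory recorded through the $e$-invariant exponents $w_i$ in \cite{MR3076731}: in odd weight $2i+1$ the degree-one part has a free summand $\ZZ_2$, realized by a Borel class I call $\epsilon_{2i+1}$, together with a copy of $\ZZ/2$; in even weight $2i$ it is cyclic of order $w_{2i}$, generated by an image-of-$J$ class $\epsilon_{2i}$; and the remaining copies of $\ZZ/2$, in every cohomological degree, are the periodic contribution of the real place $\mathrm{Gal}(\CC/\RR)$, which produce the powers of $\rho_1$. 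This already yields the listed generators in their stated bidegrees together with the torsion relations $2\rho_{2i+1}=0$ and $w_{2i}\epsilon_{2i}=0$. (The $\ZZ/2$'s are visible from the first Bockstein $\beta\tau=\rho$ alone, but the exact orders $w_{2i}$ require the K-theoretic input.)

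Next I would fix $q$ by matching reductions. The class in the $\ZZ/2$ summand of odd weight $2i+1$ is the integral Bockstein of $\tau^{2i+1}\in\MM^S_*\FF_2$, whose reduction is $\rho\tau^{2i}$, forcing $q(\rho_{2i+1})=\rho\tau^{2i}$; the free, respectively image-of-$J$, generators reduce to $\epsilon\tau^{i}$, so $q(\epsilon_{i+1})=\epsilon\tau^{i}$. With the generators and their images fixed, each product relation reduces under $q$ to an identity inside the known algebra $\MM^S_*\FF_2=\FF_2[\tau,\rho,\epsilon]/(\epsilon\rho,\epsilon^2)$: the elements $q(\rho_{2i+1}\epsilon_j)$ and $q(\epsilon_i\epsilon_j)$ are multiples of $\rho\epsilon=0$ and of $\epsilon^2=0$ and hence vanish, while $q(\rho_{2i+1}\rho_{2j+1})$ and $q(\rho_1\rho_{2(i+j)+1})$ both equal $\rho^2\tau^{2(i+j)}$.

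The hard part will be lifting these modulo-$2$ identities to honest $2$-adic relations, that is, excluding hidden $2$-divisibility in the even-weight groups of cohomological degree at least $2$ so that $\rho_{2i+1}\rho_{2j+1}$ is genuinely equal to $\rho_1\rho_{2(i+j)+1}$ and the mixed products genuinely vanish. Here I would invoke real realization: base change along $\Spec(\RR)\to S$ and Lemma~\ref{lemma:R}, where $\MM^\RR_*\ZZ_2=\ZZ_2[\rho,\tau^2]/2\rho$ shows every power of $\rho$ to be a nonzero simple $2$-torsion class detected modulo $2$. Since $\MM^S_*\FF_2\to\MM^\RR_*\FF_2$ is injective without $\tau$-torsion \cite[Corollary~7.10]{MR4192569}, the even-weight powers $\rho^2\tau^{2k-2}$ are detected by $q$; this forces the displayed $\rho$-relation and the vanishing of the $\epsilon$-products, and the torsion orders from the first step leave no room for any further relation. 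Assembling the generators, torsion orders and products produces the asserted presentation of $\MM^{\ZZ[1/2]}_*\ZZ_2$.

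The final assertion about $q$ is then immediate. As $2$-adic reduction, $q$ has image the $\FF_2$-subalgebra of $\MM^S_*\FF_2$ generated by $\rho\tau^{2i}$ and $\epsilon\tau^{i}$, and reducing the presentation modulo $2$ makes the relations $2\rho_{2i+1}$ and $w_{2i}\epsilon_{2i}$ vacuous (each $w_{2i}$ being even) while retaining the product relations, which gives the stated isomorphism $\MM^{\ZZ[1/2]}_*\ZZ_2/\ker q\cong\FF_2[\rho_{2i+1},\epsilon_j]/(\rho_{2i+1}\epsilon_j,\epsilon_i\epsilon_j,\rho_{2i+1}\rho_{2j+1}+\rho_1\rho_{2(i+j)+1})$.
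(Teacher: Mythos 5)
Your proposal follows the same overall route as the paper: additive input from the $2$-primary arithmetic of $\ZZ[1/2]$ (the $w_i$), identification of $q$ on generators via Bocksteins, and comparison with $\RR$ to detect $\rho$-power classes. However, the step that is supposed to produce the product relations fails as written. The map $\MM^S_*\FF_2\to\MM^\RR_*\FF_2$ is \emph{not} injective: $\epsilon$ is the class of the unit $2$, which is a square in $\RR$, so $\epsilon\mapsto 0$ under real base change. (What \cite[Corollary~7.10]{MR4192569} provides, and what the paper uses it for, is only the absence of $\tau$-torsion, i.e.\ that the classes $\rho^n\tau^m$ survive.) Consequently real realization can say nothing about $\rho_{2i+1}\epsilon_j$ and $\epsilon_i\epsilon_j$: these classes die in $\MM^\RR_*\ZZ_2$ whether or not they vanish in $\MM^S_*\ZZ_2$, so your claim that this ``forces \dots\ the vanishing of the $\epsilon$-products'' does not follow.

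What actually kills those products is an additive fact about the target bidegrees. Since $q(\rho_{2i+1}\epsilon_j)$ and $q(\epsilon_i\epsilon_j)$ are multiples of $\rho\epsilon=0$ and $\epsilon^2=0$, the products lie in $\ker q=2\cdot\MM^S_*\ZZ_2$, so they vanish as soon as each degree $-2$ group $\MM^S_{-2,w}\ZZ_2$ is killed by $2$, i.e.\ is $\ZZ/2$ (even weight) or $0$ (odd weight); the same fact is what upgrades equality of mod $2$ reductions to the relation $\rho_{2i+1}\rho_{2j+1}=\rho_1\rho_{2(i+j)+1}$, since two elements with the same nonzero image under $q$ are equal only when the group is exactly $\ZZ/2$. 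The paper establishes precisely this by running the long exact sequence for $\ZZ_2\xrightarrow{2}\ZZ_2\to\FF_2$, using $\beta(\rho\tau^{2i-1})=\rho^2\tau^{2i-2}\neq 0$ to see that $q$ is a bijection in bidegree $(-2,-2i)$ and that the odd-weight degree $-2$ groups vanish, and cites \cite[Lemma 7.8 and Theorem~7.9]{MR4192569} for degrees below $-2$. Your step~1 asserts the needed additive structure only through the vague ``periodic contribution of the real place,'' and step~3 never actually invokes it, leaning on the false injectivity instead. A second, smaller gap: in odd weight the identification $q(\epsilon_{2i+1})=\epsilon\tau^{2i}$ cannot be extracted from Bocksteins, because $\beta$ vanishes identically on bidegree $(-1,-2i-1)$ (both $\rho\tau^{2i}$ and $\epsilon\tau^{2i}$ are cycles); the paper needs \cite[Lemma~3.3 and 3.4]{MR1761621} here, whereas you assert it. With the $\ker q$ argument (or the paper's Bockstein computation) substituted for the real-realization step, your outline does go through.
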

\begin{proof}
  Set $S=\Spec(\ZZ[1/2])$.
  As abelian groups, by 
  \cite[Remark~46]{cwKhandbook}, we have in degree zero
$$\MM_{0,i}^S\ZZ_2 =
\begin{cases}
  \ZZ_2\{1\}&i=0\\
  0&i<0
\end{cases}
$$
while in degree $-1$ we have
$$\MM^S_{-1,k}\ZZ_2=
\begin{cases}
  \ZZ/2\{\rho_{k}\}\oplus\ZZ_2\{\epsilon_{k}\}&k\text{ odd}\\
  \ZZ/w_{k}\{\epsilon_{k}\}&k\text{ even}
\end{cases}
$$
(where $w_0=1$ by convention so that $\MM_{-1,0}\ZZ_2=0$).

  Since we know the Bockstein for $\MM^S_*\FF_2$ we deduce what the map $\MM^S_{-1,*}\ZZ_2\to\MM^S_{-1,*}\FF_2$ does to the generators: in even weight $-2i<0$, consider
  $$\xymatrixcolsep{.5pc}\xymatrix{
    \MM^S_{-2,-2i}\FF_2\ar@{=}[d]&\ar[l]_{q}\MM^S_{-2,-2i}\ZZ_2&\ar[l]_{2}\MM^S_{-2,-2i}\ZZ_2&\ar[l]_{\partial}
    \MM^S_{-1,-2i}\FF_2\ar@{=}[d]&\ar[l]_{q}\MM^S_{-1,-2i}\ZZ_2\ar@{=}[d]&\ar[l]_{2}\MM^S_{-1,-2i}\ZZ_2\ar@{=}[d]&\ar[l]_{\partial}\MM^S_{0,-2i}\FF_2\ar@{=}[d]\\
    \FF_2\{\rho^2\tau^{2i-2}\}&?&?&\FF_2\{\rho\tau^{2i-1},\epsilon\tau^{2i-1}\}&\ZZ/w_{2i}\{\epsilon_{2i}\}&\ZZ/w_{2i}\{\epsilon_{2i}\}&\FF_2\{\tau^{2i}\}.
    }
    $$ 
    We know that $\beta(\rho\tau^{2i-1})=\rho^2\tau^{2i-2}$, so $q\colon\MM^S_{-2,-2i}\ZZ_2\to\MM^S_{-2,-2i}\FF_2=\FF_2\{\rho^2\tau^{2i-2}\}$ must be a bijection and (we are free to choose the generator $\epsilon_{2i}\in \MM^S_{-1,-2i}\ZZ_2$ so that) $q(\epsilon_{2i})=\epsilon\tau^{2i-1}$.  Since $\epsilon^2=0$ this also gives the relation $\epsilon_{2i}\epsilon_{2j}=0$.  In odd weight $k=-2i-1<0$ we likewise consider
    $$\xymatrixcolsep{.5pc}\xymatrix{
    \MM^S_{-2,k}\FF_2\ar@{=}[d]&\ar[l]_{q}\MM^S_{-2,k}\ZZ_2&\ar[l]_{2}\MM^S_{-2,k}\ZZ_2&\ar[l]_{\partial}
    \MM^S_{-1,k}\FF_2\ar@{=}[d]&\ar[l]_{q}\MM^S_{-1,k}\ZZ_2\ar@{=}[d]&\ar[l]_{2}\MM^S_{-1,k}\ZZ_2\ar@{=}[d]&\ar[l]_{\partial}\MM^S_{0,k}\FF_2\ar@{=}[d]\\
    0&?&?&\FF_2\{\rho\tau^{2i},\epsilon\tau^{2i}\}&\ZZ_2\{\rho_k,\epsilon_{k}\}/2\rho_k&\ZZ_2\{\rho_k,\epsilon_{k}\}/2\rho_k
    &\FF_2\{\tau^{2i+1}\}.
    }
    $$
    We immediately get that $\MM^S_{-2,-2i-1}\ZZ_2=0$ and all products of classes in degree $1$ with different parity must vanish: $\epsilon_{2j}\epsilon_{2i+1}=\epsilon_{2j}\rho_{2i+1}=0$.  Now,  by \cite[Lemma~3.3 and 3.4]{MR1761621} (there is a natural choice of generators $\rho_{2i+1}$ and $\epsilon_{2i+1}$ so that) $q\colon\MM^S_{-1,-2i-1}\ZZ_2\to\MM^S_{-1,-2i-1}\FF_2$ sends $\rho_{2i+1}$ to $\rho\tau^{2i}$ and $\epsilon_{2i+1}$ to $\epsilon\tau^{2i}$ and so we get the remaining products of classes in degree $-1$: 
$\rho_{2i+1}\rho_{2j+1}=\rho_1\rho_{2(i+j)+1}$ is the unique generator in bidegree $(-2,-2i-2j-2)$ and $\epsilon_{2i+1}\epsilon_{2j+1}=0$.

    To finish off the result, we use that the ring map $q\colon\MM^S_{*}\ZZ_2\to\MM^S_{*}\FF_2$ is an isomorphism in all even bidegrees $(d,w)$ with $d<-2$.
   This follows via base-change to the rational and real numbers,
    see \cite[Lemma 7.8 and Theorem~7.9]{MR4192569}, 
    and uses that $\MM^\RR_*\ZZ_2\to\MM^\RR_*\FF_2$ is the map given by $\ZZ_2[\rho,\tau^2]/2\rho\to\FF_2[\rho,\tau^2]\subseteq\FF_2[\rho,\tau]$.
\end{proof}

Since $\tau$ is not available as a generator in $\mssz_2$, but its multiples with $\tau_i$'s and $\xi_i$'s appear in the kernel  of $\ker\beta_{\MM_*^S\FF_2(\MM^S\ZZ)}\to\MM^S_*\FF_2$, we introduce the notation $\tau_{j,i}$ and $\xi_{j,i}$ for the elements in $\MM^S_*\ZZ_2\times_{\MM^S_*\FF_2}(\MM^S_*\FF_2)\MM^S\ZZ$ mapping to $\tau^i\tau_j$ and $\tau^i\xi_j$ in 
$\MM^S_*\FF_2(\MM^S\FF_2)$, respectively.  
We retain the notation $\eta_{a,U}$ and $y_{a,U}$ with the same meaning as before (with $\tau_j$ and $\xi_i$ replaced by $\tau_{j,0}$ and $\xi_{j,0}$ respectively) and obtain

\begin{lemma}
  \label{lemma:Z12}
  Over $S=\Spec(\ZZ[1/2])$, the $2$-adic dual Steenrod algebra $\MM_*^S(\ZZ)(\MM^S\ZZ)_2$ is generated by
  $\{y_{a,U}, \rho\eta_{a,U}+\tau y_{a,U}\}$ as a
  $\MM^S_*\ZZ_2$-subalgebra of
  $$\MM^S_*\ZZ_2\times_{\MM^S_*\FF_2}(\MM^S_*\FF_2)\MM^S\ZZ\cong
  \ZZ_2[\rho_{2i+1},\epsilon_{i+1},\tau_{j+1,i},\xi_{j+1,i}]_{i,j\geq0}/\left(
  \begin{smallmatrix}
    2\rho_{2i+1},\quad w_{2i}\epsilon_{2i},\\
    \rho_{2i+1}\epsilon_j,\quad\epsilon_i\epsilon_j,\\
    \rho_{2i+1}\rho_{2j+1}+\rho_1\rho_{2(i+j)+1}\\
    2\tau_{i,j},\quad 2\xi_{i,j}\\
    \rho_{2i+1}\tau_{j,k}+\rho_1\tau_{j,2i+k},\qquad\epsilon_i\tau_{j,k}+\epsilon_1\tau_{j,j-1+k},\\
    \rho_{2i+1}\xi_{j,k}+\rho_1\xi_{j,2i+k},\qquad\epsilon_i\xi_{j,k}+\epsilon_1\xi_{j,j-1+k}
  \end{smallmatrix}
\right).
  $$
\end{lemma}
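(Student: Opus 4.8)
The plan is to obtain everything from the pullback description of Theorem~\ref{cor:aspullback}. That theorem gives, after $2$-completion,
$$\MM^S_*\ZZ(\MM^S\ZZ)_2\cong\MM^S_*\ZZ_2\times_{\MM^S_*\FF_2}\ker\beta_{(\MM^S_*\FF_2)\MM^S\ZZ},$$
and since $\ker\beta_{(\MM^S_*\FF_2)\MM^S\ZZ}$ is a subalgebra of $(\MM^S_*\FF_2)\MM^S\ZZ$ sharing the left leg $q\colon\MM^S_*\ZZ_2\to\MM^S_*\FF_2$, this fibre product sits inside the larger pullback $P\defined\MM^S_*\ZZ_2\times_{\MM^S_*\FF_2}(\MM^S_*\FF_2)\MM^S\ZZ$ of the statement. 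So I would split the claim into identifying $P$ with the displayed algebra and then locating $\MM^S_*\ZZ(\MM^S\ZZ)_2$ inside it. The second half is already available: the discussion preceding Theorem~\ref{cor:aspullback} presents $\ker\beta_{(\MM^S_*\FF_2)\MM^S\ZZ}$ as the $\ker\beta_{\MM^S_*\FF_2}$-subalgebra generated by $\{y_{a,U},\,\rho\eta_{a,U}+\tau y_{a,U}\}$; as all these classes lie in the augmentation ideal they lift to $P$ with vanishing first coordinate, so pulling back over $q$ exhibits $\MM^S_*\ZZ(\MM^S\ZZ)_2$ as the $\MM^S_*\ZZ_2$-subalgebra of $P$ that they generate, which is the assertion.

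To identify $P$, I would note that the projection $P\to\MM^S_*\ZZ_2$ is split by $x\mapsto(x,\eta_L q(x))$ and has kernel the augmentation ideal $I\subseteq(\MM^S_*\FF_2)\MM^S\ZZ$, so that $P\cong\MM^S_*\ZZ_2\oplus I$ with $\MM^S_*\ZZ_2$ a subring; the freeness of $(\MM^S_*\FF_2)\MM^S\ZZ$ over $\MM^S_*\FF_2$ on the basis $\{\eta_{a,U}\}$ makes both the splitting and the module decomposition transparent. The subring $\MM^S_*\ZZ_2$ supplies the generators $\rho_{2i+1},\epsilon_{i+1}$ and their relations, which are exactly those found in the preceding computation of $\MM^{\ZZ[1/2]}_*\ZZ_2$. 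Setting $\tau_{j,i}=(0,\tau^i\tau_j)$ and $\xi_{j,i}=(0,\tau^i\xi_j)$, one checks directly that these generate $I$ as an $\MM^S_*\ZZ_2$-module and satisfy the displayed relations: $2\tau_{j,i}=2\xi_{j,i}=0$ because the first coordinate is $0$ and the second lies in a characteristic $2$ ring, while the identities $\rho_{2i+1}\tau_{j,k}=\rho_1\tau_{j,2i+k}$ and $\epsilon_i\tau_{j,k}=\epsilon_1\tau_{j,\,i-1+k}$ (and their $\xi$-analogues) follow from $q(\rho_{2i+1})=\rho\tau^{2i}$, $q(\rho_1)=\rho$, $q(\epsilon_i)=\epsilon\tau^{i-1}$, $q(\epsilon_1)=\epsilon$, since multiplying by $\rho_{2i+1}$ or $\epsilon_i$ merely shifts the accompanying power of $\tau$ in the second coordinate.

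The main obstacle is completeness of the presentation. One must verify that the displayed relations, together with the ambient multiplicative relations of $(\MM^S_*\FF_2)\MM^S\ZZ$ transported to the new generators (namely Voevodsky's relator $\tau_{i+1}^2+\tau\xi_{i+2}+\rho\tau_{i+2}$ and its consequences recorded in Lemma~\ref{lem:relation}), cut out precisely $P$ and nothing more. The delicate point is that a single monomial $\tau^n\tau_j\cdots$ of $I$ admits several factorizations into the generators $\tau_{j,i},\xi_{l,m}$, because the shared power $\tau^n$ can be distributed among the factors; in particular $\tau_{j,i}\xi_{l,m}$ depends only on $i+m$, so one must confirm that all such factorizations are identified and that no spurious classes survive. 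I would settle this by a bidegree-by-bidegree count of $P\cong\MM^S_*\ZZ_2\oplus I$ against the normal forms of the presented algebra, using the free $\MM^S_*\FF_2$-basis $\{\eta_{a,U}\}$ for $I$ and the torsion orders $w_{2i}$ of $\MM^{\ZZ[1/2]}_*\ZZ_2$ to account for the $\ZZ/w_{2i}$-summands, with the degree formula for $y_{a,U}$ from Proposition~\ref{prop:algclosed} providing the bookkeeping on the subalgebra side.
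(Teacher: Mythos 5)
Your overall strategy is the one the paper intends (it gives no separate proof of Lemma~\ref{lemma:Z12}, which is assembled from Theorem~\ref{cor:aspullback}, the $Z\cup U$ basis at the start of Section~\ref{sec:other}, and the computation of $\MM^{\ZZ[1/2]}_*\ZZ_2$), but your first paragraph has a genuine gap at the step ``pulling back over $q$ exhibits $\MM^S_*\ZZ(\MM^S\ZZ)_2$ as the $\MM^S_*\ZZ_2$-subalgebra of $P$ that they generate.'' What Section~\ref{sec:other} provides is generation of $\ker\beta_{\MM^S_*\FF_2(\MM^S\ZZ)}$ over the coefficient ring $\ker\beta_{\MM^S_*\FF_2}$, and that ring is strictly larger than what $\MM^S_*\ZZ_2$ can supply through $q$: it contains the powers $\tau^{2j}$, whereas $\im q$ is generated by $\rho\tau^{2i}$ and $\epsilon\tau^{i}$ and contains no $\tau^{2j}$ with $j\geq 1$, since $\MM^S_{0,-2j}\ZZ_2=0$. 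Consequently the basis vectors $\tau^{2j}y_{a,U}$ and $\tau^{2j}(\rho\eta_{a,U}+\tau y_{a,U})$ of $\ker\beta_{\MM^S_*\FF_2(\MM^S\ZZ)}$ produce elements $(0,\tau^{2j}y_{a,U})$ and $\bigl(0,\tau^{2j}(\rho\eta_{a,U}+\tau y_{a,U})\bigr)$ of the fibre product which the $\MM^S_*\ZZ_2$-subalgebra generated by the \emph{unshifted} classes $\{y_{a,U},\rho\eta_{a,U}+\tau y_{a,U}\}$ does not contain. Indeed, project to the second coordinate and reduce modulo the ideal $(\rho,\epsilon)$: the coefficients $\im q$ reduce to $\FF_2$, and the generators reduce to $\bar y_{a,U}$ (motivic weight $\geq 1$) and $\tau\bar y_{a,U}$ (weight $\geq 0$), so every element of your subalgebra reduces to a sum of homogeneous classes of non-negative weight; but $\tau^{2}y_{0,\{1\}}=\tau^{2}\xi_1=\beta(\tau^2\tau_1)$ reduces to a nonzero class of weight $-1$ in $\FF_2[\tau,\tau_{i+1},\xi_{i+1}]/(\tau_{i+1}^2+\tau\xi_{i+2})$. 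Since weights add, no products of the unshifted generators (not even the $\tau$-multiples produced by Lemma~\ref{lem:relation}) can reach it.

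This missing coefficient is precisely what the paper's doubly indexed classes $\tau_{j,i},\xi_{j,i}$ are for (``$\tau$ is not available as a generator in $\mssz_2$, but its multiples \dots appear in the kernel''): the generating set must be read so that $\eta_{a,U}$ and $y_{a,U}$ are formed from the $\tau_{j,i},\xi_{j,i}$ with varying second index, equivalently so that the shifted Bockstein images $\beta(\tau^{2j}\eta_{a,U})=\tau^{2j}y_{a,U}$ and $\beta(\tau^{2j+1}\eta_{a,U})=\tau^{2j}(\rho\eta_{a,U}+\tau y_{a,U})$ are among the generators; these together with $\im q$ do span all of $\ker\beta\cap\ker\bigl(\MM^S_*\FF_2(\MM^S\ZZ)\to\MM^S_*\FF_2\bigr)$. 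You actually construct the needed elements $(0,\tau^i\tau_j)$ and $(0,\tau^i\xi_j)$ in your second paragraph, but they never enter your generation argument --- that disconnect is what has to be repaired. The rest of your proposal is sound and, if anything, more careful than the text: the splitting $P\cong\MM^S_*\ZZ_2\oplus I$, the verification of the listed relations (with the correct index $i-1+k$ where the paper's display misprints $j-1+k$), and the warning that the displayed relations alone do not present $P$ (the transported Voevodsky relator and the identification of different distributions of $\tau$-powers across a product are absent) are all legitimate points that the paper leaves implicit in Lemma~\ref{lem:relation} and never checks by the kind of bidegree count you propose.
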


\begin{footnotesize}
\bibliographystyle{plain}
\bibliography{mhh}

\begin{thebibliography}{10}

\bibitem{MR1761621}
Dominique Arlettaz, Grzegorz Banaszak, and Wojciech Gajda.
\newblock On 2-adic cyclotomic elements in {$K$}-theory and \'{e}tale
  cohomology of the ring of integers.
\newblock {\em J. Number Theory}, 82(2):225--255, 2000.

\bibitem{dhko}
Bj{\o}rn Dundas, Mike Hill, Kyle Ormsby, and Paul~Arne {\O}stv{\ae}r.
\newblock Hochschild homology of mod-p motivic cohomology over algebraically
  closed fields, 2023.

\bibitem{MR3730515}
Marc Hoyois, Shane Kelly, and Paul~Arne {\O}stv{\ae}r.
\newblock The motivic {S}teenrod algebra in positive characteristic.
\newblock {\em J. Eur. Math. Soc. (JEMS)}, 19(12):3813--3849, 2017.

\bibitem{MR686130}
Stanley~O. Kochman.
\newblock Integral cohomology operations.
\newblock In {\em Current trends in algebraic topology, {P}art 1 ({L}ondon,
  {O}nt., 1981)}, volume~2 of {\em CMS Conf. Proc.}, pages 437--478. Amer.
  Math. Soc., Providence, R.I., 1982.

\bibitem{MR4192569}
Jonas~Irgens Kylling, Oliver R\"{o}ndigs, and Paul~Arne {\O}stv{\ae}r.
\newblock Hermitian {$K$}-theory, {D}edekind {$\zeta$}-functions, and quadratic
  forms over rings of integers in number fields.
\newblock {\em Camb. J. Math.}, 8(3):505--607, 2020.

\bibitem{riou2012operations}
Jo{\"e}l Riou.
\newblock Op\'erations de {S}teenrod motiviques, 2012.

\bibitem{spitzweck2013commutative}
Markus Spitzweck.
\newblock {\em A commutative {$\mathbb P^1$}-spectrum representing motivic
  cohomology over {D}edekind domains}, volume 157 of {\em M{\'e}m. Soc. Math.
  Fr., Nouv. S{\'e}r.}
\newblock 2013.

\bibitem{stahn2021motivic}
Sven-Torben Stahn.
\newblock The motivic {A}dams-{N}ovikov spectral sequence at odd primes over
  $\mathbb{C}$ and $\mathbb{R}$, 2021.

\bibitem{zbMATH01526539}
Andrei~A. Suslin.
\newblock Higher {Chow} groups and {\'e}tale cohomology.
\newblock In {\em Cycles, transfers, and motivic homology theories}, pages
  239--254. Princeton, NJ: Princeton University Press, 2000.

\bibitem{Voevodskymod2}
Vladimir Voevodsky.
\newblock Motivic cohomology with {${\bf Z}/2$}-coefficients.
\newblock {\em Publ. Math. Inst. Hautes \'Etudes Sci.}, (98):59--104, 2003.

\bibitem{MR2811603}
Vladimir Voevodsky.
\newblock On motivic cohomology with {$\bold Z/l$}-coefficients.
\newblock {\em Ann. of Math. (2)}, 174(1):401--438, 2011.

\bibitem{cwKhandbook}
Charles~A. Weibel.
\newblock Algebraic {{\(K\)}}-theory of rings of integers in local and global
  fields.
\newblock In {\em Handbook of \(K\)-theory. Vol. 1 and 2}, pages 139--190.
  Berlin: Springer, 2005.

\bibitem{MR3076731}
Charles~A. Weibel.
\newblock {\em The {$K$}-book}, volume 145 of {\em Graduate Studies in
  Mathematics}.
\newblock American Mathematical Society, Providence, RI, 2013.
\newblock An introduction to algebraic $K$-theory.

\end{thebibliography}
\end{footnotesize}
\vspace{0.1in}

\begin{center}
Department of Mathematics, University of Bergen, Norway\\
email: dundas@math.uib.no
\end{center}

\begin{center}
Department of Mathematics, University of Milan, Italy \& \\
Department of Mathematics, University of Oslo, Norway\\
e-mail: paul.oestvaer@unimi.it, paularne@math.uio.no
\end{center}
\end{document}